\newlength\aftertitskip     \newlength\beforetitskip
\newlength\interauthorskip  \newlength\aftermaketitskip
\def\maketitle{\par
 \begingroup
   \def\thefootnote{\fnsymbol{footnote}}
   \def\@makefnmark{\hbox to 4pt{$^{\@thefnmark}$\hss}}
   \@maketitle \@thanks
 \endgroup
\setcounter{footnote}{0}
 \let\maketitle\relax \let\@maketitle\relax
 \gdef\@thanks{}\gdef\@author{}\gdef\@title{}\let\thanks\relax}
\def\@startauthor{\noindent \normalsize\bf}
\def\@endauthor{}
\def\@starteditor{\noindent \small {\bf Editor:~}}
\def\@endeditor{\normalsize}
\def\@maketitle{\vbox{\hsize\textwidth
 \linewidth\hsize \vskip \beforetitskip
 {\begin{center} \LARGE\@title \par \end{center}} \vskip \aftertitskip
 {\def\and{\unskip\enspace{\rm and}\enspace}%
  \def\addr{\small\it}%
  \def\email{\hfill\small\tt}%
  \def\name{\normalsize\bf}%
  \def\AND{\@endauthor\rm\hss \vskip \interauthorskip \@startauthor}
  \@startauthor \@author \@endauthor}
}}
\newcommand{\E}{\mathbb{E}}
\newcommand{\nlsum}{\sum\nolimits}
\newcommand{\frob}[1]{\|#1\|_{\mathrm{F}}}
\newcommand{\pd}{\mathbb{P}}
\newcommand{\reals}{\mathbb{R}}
\newcommand{\half}{\tfrac{1}{2}}
\newcommand{\nhalf}{\nicefrac{1}{2}}
\newcommand{\ip}[2]{\langle {#1},\, {#2} \rangle}
\newcommand{\norm}[1]{\| {#1} \|}
\newcommand{\gm}{\#}
\newcommand{\Mc}{\mathcal{M}}
\newcommand{\set}[1]{\left\lbrace #1 \right\rbrace}
\newcommand{\riem}{d_R}
\newcommand{\dc}{DC\xspace}
\newcommand{\sdiv}{\delta_s}
\DeclareMathOperator{\Exp}{Exp}
\def\sint{\begingroup\textstyle\int\endgroup} 
\DeclareMathOperator{\trace}{tr}
\DeclareMathOperator*{\argmin}{argmin}
\DeclareMathOperator{\grad}{grad}
\DeclareMathOperator{\ld}{ldet}
\newtheorem{theorem}{Theorem}[section]
\newtheorem{lemma}[theorem]{Lemma}
\theoremstyle{definition}
\newtheorem{defn}[theorem]{Definition}
\newtheorem{rmk}[theorem]{Remark}
\newtheorem{ass}[theorem]{Assumption}
\numberwithin{equation}{section}
\title{On a class of geodesically convex optimization problems \\ solved via Euclidean MM methods}
\author{\name Melanie Weber \email{mweber@seas.harvard.edu} \\
\addr{Harvard University}\thanks{Work partially done while at the University of Oxford.}\\
\name Suvrit Sra \email{suvrit@mit.edu}\\
  \addr{MIT, Laboratory for Information and Decision Systems}
}
\begin{document}
\maketitle

\begin{abstract}
We study geodesically convex (g-convex) problems that can be written as a difference of Euclidean convex functions. This structure arises in several optimization problems in statistics and machine learning, e.g., for matrix scaling, M-estimators for covariances, and Brascamp-Lieb inequalities. Our work offers efficient algorithms that on the one hand exploit g-convexity to ensure global optimality along with guarantees on iteration complexity. On the other hand, the split structure permits us to develop Euclidean Majorization-Minorization algorithms that help us bypass the need to compute expensive Riemannian operations such as exponential maps and parallel transport. We illustrate our results by specializing them to a few concrete optimization problems that have been previously studied in the machine learning literature. Ultimately, we hope our work helps motivate the broader search for mixed Euclidean-Riemannian optimization algorithms.
\end{abstract}

\section{Introduction}
We study optimization problems that can be written in the form
\begin{equation}
  \label{eq:1}
  \min_{x\in \Mc}\quad \phi(x) = f(x) - h(x),
\end{equation}
where $\phi: \Mc \to \reals$ is a smooth geodesically convex (\emph{g-convex}) function on a Riemannian manifold $\Mc$ (within an ambient Euclidean space), while both $f$ and $h$ are smooth Euclidean convex functions. Problems with such structure most commonly arise when $\Mc$ is the manifold of positive definite matrices, and we are estimating certain covariance or kernel matrices. Some examples relevant to machine learning where the model~\eqref{eq:1} applies include: Tyler's and related M-estimators~\citep{tyler1987distribution,wiesel2012geodesic,ollila2014regularized,sra2015conic,franks2020rigorous}; certain geometric optimization problems~\citep{sra2015conic,bacak2014convex}; metric learning~\citep{zadeh2016geometric}; robust subspace recovery~\citep{zhang2016robust}; matrix barycenters based on S-Divergence~\citep{sdiv}; matrix-square roots~\citep{sra2016matrix}; computation of Brascamp-Lieb constants~\citep{sra2018geodesically,OS2018,bennett2008brascamp}; certain Wasserstein bounds on entropy~\citep{courtade2017wasserstein}; learning Determinantal Point Processes (DPPs)~\citep{mariet2015fixed}, among others.

A powerful tool for solving g-convex problems is Riemannian optimization~\citep{absil2009optimization,boumal2020introduction,boumal2014manopt}, for which under suitable regularity conditions on the objective and its gradients both local~\citep{udriste1994convex,da1998geodesic,absil2009optimization} and global convergence rates can be attained~\citep{zhang2016first,bento2017iteration}. The corresponding algorithms typically require access to Riemannian tools such as exponential maps, geodesics, and parallel transports (in some settings retractions and vector transports may suffice). However, for g-convex problems that possess the special ``difference of (Euclidean) convex'' (\dc) structure~\eqref{eq:1}, one may wonder if simpler,  potentially more efficient methods exist.

The goal of this paper is to exploit the \dc structure of~\eqref{eq:1} via Euclidean Majorization-Minorization (MM) methods. Specifically, we use the differentiability of $f$ and $h$ to apply the Convex-Concave Procedure (CCCP)~\citep{yuille2001concave}, which is a well-known MM method that exploits the \dc structure to guarantee a monotonically decreasing sequence $\{\phi(x_k)\}_{k\ge 0}$ of objectives by successively minimizing upper bounds of $\phi(x_k)$. In general due to nonconvexity, CCCP is able to ensure only \emph{asymptotic convergence}~\citep{lanckriet2009convergence}, and at most to stationary points~\citep{le2018dc}. But our nonconvex cost function is not arbitrary: it is actually g-convex. The aim is therefore to understand how to ensure \emph{non-asymptotic} convergence guarantees for CCCP to converge to the global optimum of~\eqref{eq:1}. 

\vspace*{-4pt}
\subsection{Main contributions}
\vspace*{-4pt}
Given the above background, we summarize our main contributions below.
\begin{enumerate}
\setlength{\itemsep}{2pt}
\item We identify the \dc structure~\eqref{eq:1} within numerous Riemannian optimization problems; subsequently, we develop global non-asymptotic convergence guarantees for the CCCP algorithm (Alg.~\ref{alg:cccp}) applied to solving such problems. To our knowledge, this work presents the \textbf{first} general class of nonconvex \dc optimization problems (beyond the Polyak-Łojasiewicz class) for which global iteration complexity of CCCP could be established.
\item We illustrate the implications of using Euclidean CCCP for several applications, including  M-estimators of scatter matrices, barycenters of positive definite matrices, and computation of the Brascamp-Lieb constant (which comes up in operator scaling). 
Our theory offers a unified framework for analyzing these CCCP algorithms. Importantly, this framework provides transparent \emph{non-asymptotic convergence guarantees}, where previously such guarantees could only be obtained through a more complicated fixed-point machinery.
\end{enumerate}

While our theoretical analysis turns out be simple, in that it does not require any deep tools from Riemannian geometry, it draws attention to the important (in hindsight) realization that:
\begin{center}
\vspace*{-2pt}
  ``\emph{Many Riemannian optimization problems can be solved efficiently via a Euclidean lens.}''
\end{center}
Ultimately, we hope this realization paves the way for a broader study of mixed Riemannian-Euclidean optimization. On a more technical note, we remark that while the Euclidean view bypasses the usual Riemannian tools, and can thus potentially be computationally more efficient, the CCCP approach requires an oracle more powerful than a gradient oracle, which makes it a less general choice. Nevertheless, for several applications, we show that such an oracle is actually available. 


\vspace*{-6pt}
\subsection{Related work}
\vspace*{-6pt}
\textbf{CCCP and \dc programming.} Riemannian \dc problems have been studied recently, notably in~\citep{cruzneto,souza2015proximal,ferreira2021difference}. This line of work studies the difference of \emph{geodesically} convex functions as opposed to difference of \emph{Euclidean} convex functions. We follow a different approach that generalizes~\citep{MISO} to problems of the form~\ref{eq:1}. The methods of~\citep{souza2015proximal,ferreira2021difference} involve solving nonconvex subproblems at each iteration, whereas the Euclidean CCCP approach requires solving convex ones. 


\textbf{Riemannian optimization.} Riemannian optimization has recently seen a surge of interest in machine learning. Generalizations of classical Euclidean algorithms to the Riemannian setting have been studied for convex~\citep{zhang2016first}, nonconvex~\citep{boumal2019global}, stochastic~\citep{bonnabel2013stochastic,zhang2016first,zhang2016riemannian} and constrained problems~\citep{weber2022riemannian,weber2021projection}, among others. An introductory treatment of Riemannian optimization methods can be found in~\citep{absil2009optimization,boumal2020introduction}, whereas the works~\citep{udriste1994convex,bacak2014convex,zhang2016first} focus on the geodesically convex setting. However, there has been little work on methods that combine insights from both Euclidean an Riemannian viewpoints.

\textbf{Applications.} We discuss applications more extensively in Section~\ref{sec:appl}. 


\vspace*{-4pt}
\section{Background and Notation}
\vspace*{-6pt}
\subsection{Riemannian manifolds}
\vspace*{-4pt}
A \emph{manifold} $\Mc$ is a locally Euclidean space that is equipped with a differential structure.  Its \emph{tangent spaces} $T_x \Mc$ consist of the tangent vectors at points $x \in \Mc$.  We focus on \emph{Riemannian manifolds}, i.e., smooth manifolds with an inner product $\ip{u}{v}_x$ defined on $T_x \Mc$ for each $x\in \Mc$.  To map between a manifold and its tangent space, we define  \emph{exponential maps} $\Exp: T_x \Mc \rightarrow \Mc$,  $y=\Exp_x(g_x) \in \Mc$, given with respect to a geodesic $\gamma : [0,1]\times \Mc \times \Mc \rightarrow \Mc$, where $\gamma(0; x, y)= x$, $\gamma(1; x, y)=y$ and $\dot{\gamma}(0; x, y)=g_x$. The \emph{inverse} exponential map 
$\Exp^{-1}: \Mc \rightarrow T_x \mathcal{M}$
defines a diffeomorphism from the neighborhood of $x \in \mathcal{M}$ onto the neighborhood of $0 \in T_x \mathcal{M}$ with $\Exp_x^{-1}(x)=0$.  The inner product structure on $T_x \Mc$ defines a norm $\norm{v}_x := \sqrt{\ip{v}{v}_x}$ for $v \in T_x \Mc$.  We define the geodesic distance of $x,y \in \Mc$ as $d(x,y)$.  Finally, we note that we limit our attention to Hadamard manifolds, i.e., complete, connected Riemannian manifolds with globally nonpositive curvature, as they present the simplest setting for discussing geodesic convexity. 

The goal of this paper is the optimization of functions $\phi: \Mc \rightarrow \reals$. If $\phi$ is differentiable, then its gradient $\grad\phi(x)$ is defined as the vector $v \in T_x \mathcal{M}$ with $D\phi(x)[v] = \ip{\grad\phi(x)}{v}_x$.
%
We say that $\phi$ is \emph{geodesically convex} (short: \emph{g-convex}), if
\begin{align} \label{eq:L2}
  \phi(y) \geq \phi(x) + \ip{\grad\phi(y)}{\Exp_x^{-1} (y)}_x 
  \qquad \forall x,y \in \mathcal{M}.
\end{align}
In the applications considered in this paper, $\Mc$ will be the manifold of positive definite matrices, i.e., 
\begin{equation}
    \Mc = \pd_d := \lbrace X \in \reals^{d \times d}: X \succ 0 \rbrace \; ,
\end{equation}
i.e., the set of all real matrices with only positive eigenvalues. We can define a Riemannian structure on $\pd_d$ with respect to the inner product
\begin{align*}
\ip{A}{B}_X = \trace \left(  X^{-1} A X^{-1} B \right) \qquad X &\in \pd_d, A,B \in T_X(\pd_d)= \mathbb{H}_d \; ,
\end{align*}
where the tangent space $\mathbb{H}_d$ is the space of symmetric matrices.  With this notion,  $\pd_d$ is a Cartan-Hadamard manifold with non-positive sectional curvature.

Throughout the paper, $\norm{\cdot}_2$ will denote the Euclidean norm.

\begin{rmk}
Observe that we did not define the usual Lipschitz continuity of Riemannian gradients, as we will not be using that in this paper. We will blend the Riemannian view with the Euclidean, and will instead require Euclidean $L$-smoothness, which for a $C^1$ function $h(\cdot)$ is defined as:
\[ \|\nabla h(x)-\nabla h(y)\|_2 \le L\|x-y\|_2. \]
\end{rmk}

\subsection{Difference of convex functions}

Our goal is to develop efficient algorithms for minimizing g-convex functions that are \emph{difference of (Euclidean) convex} (short: \dc) functions. A main motivation for us begins with problems of the form
\begin{align}
  \label{eq:2}
  \min_{X \equiv \set{X_1,\ldots,X_n} \in \pd_d} \phi(X) &:= \underbrace{-\nlsum_{i=1}^n\log\det(X_i)}_{f(X)} 
  - \underbrace{\left[-\log\det\left(\nlsum_{i=1}^n A_i^*X_iA_i\right)\right]}_{h(X)},\\
  \label{eq:3}
  \min_{X \in \pd_d} \phi(X) &:= \underbrace{- \log\det(X)}_{f(X)} 
  - \underbrace{\left[\nlsum_j -\log\det(A_j^*XA_j)\right]}_{h(X)}.
\end{align}
As shown in~\citep{sra2018geodesically}, objectives~\eqref{eq:2} are \eqref{eq:3} g-convex. Since $\log\det(X)$ is Euclidean concave, it is clear that \eqref{eq:2} and \eqref{eq:3} are \dc programs of the form~\eqref{eq:1}. We will focus on problem~\eqref{eq:3} to develop and analyze our proposed algorithm. However, very similar techniques can be used to derive an analogous approach for~\eqref{eq:2}. 



\section{CCCP with global iteration complexity via g-convexity} 
We propose a Euclidean CCCP method for solving g-convex \dc problems. Our proposed method (Alg.~\ref{alg:cccp}) utilizes insights on the structure of problem~\ref{eq:1} from both Euclidean and Riemannian viewpoints.  Importantly, we exploit the g-convexity of the \dc objective to obtain a non-asymptotic iteration complexity, i.e., a non-asymptotic convergence rate to the global optimum, while exploiting Euclidean Lipschitz-smoothness to control CCCP iterates. This approach is in contrast to the standard CCCP approach that typically only guarantees asymptotic convergence. 

The analysis in this section relies on the following manifold-dependent assumption on the relation between Euclidean and geodesic distances.
\begin{ass}\label{ass}
Let $x,y \in \Mc$. 
We have $\| x - y \|_2 \leq \alpha_\Mc(d(x,y))$, where $\alpha_\Mc$ is a bounded and positive function that depends on the geometry of $\Mc$ only.
\end{ass}
\begin{rmk}
Assumption~\ref{ass} is fulfilled for important instances of problem~\ref{eq:1}. Notably, if $\Mc$ is an embedded submanifold (e.g., the unit sphere or the hyperboloid), Assumption~\ref{ass} holds with $\alpha_\Mc$ being the identity. If $\Mc = \pd_d$, which includes all applications discussed in sec.~\ref{sec:appl}, we have 
\begin{equation}
    \norm{x-y}_2^2 \leq \sqrt{2} \frac{e^{d(x,y)} - 1}{e^{d(x,y)}} \max \lbrace \norm{x}_2, \norm{y}_2 \rbrace \; .
\end{equation}
We defer the proof of this relation to the appendix.
\end{rmk}

\subsection{Algorithm}
\begin{algorithm*}
 \caption{Euclidean Majorization-Minimization scheme for Riemannian \dc} 
  \label{alg:cccp}
  \begin{algorithmic}[1]
    \State \textbf{Input:} $x_0 \in \Mc$, K
    \For{$k=0,1,\ldots,K-1$}
        \State Let $Q(x, x_k) := f(x) - h(x_k) - \ip{\nabla h(x_k)}{x-x_k}$.
        \State $x_{k+1} \gets \argmin_{x \in \Mc} Q(x,x_k)$.
    \EndFor
    \State \textbf{Output:} $x_K$
  \end{algorithmic}
\end{algorithm*}
Recall from~\eqref{eq:1} that we have $\phi(x) = f(x) - h(x)$. Since $h(x)$ is convex, $-h(x)$ is concave and we can upper bound it as
\begin{equation}
  \label{eq:7}
  -h(x) \le -h(y) - \ip{\nabla h(y)}{x-y}.
\end{equation}

To define a majorization-minimization method for Riemannian \dc problems, we build on classical CCCP methods and use gradients to linearize the concave part $-h(x)$ of the objective. CCCP utilizes the bound~\eqref{eq:7} in its update rule. In each iteration it seeks to minimize the upper bound
\begin{equation*}
  \phi(x) \le Q(x, y) := f(x)-h(y) - \ip{\nabla h(y)}{x-y}.
\end{equation*}
Since we must ensure that $x \in \Mc$, the CCCP update step (\emph{CCCP oracle}) in our case is
\begin{equation}
  \label{eq:8}
  x_{k+1} \gets \argmin_{x \in \Mc} Q(x,x_k),\qquad k=0,1,\ldots.
\end{equation}
The resulting algorithm is described schematically in Alg.~\ref{alg:cccp}.

\subsection{Convergence analysis}
The convergence of Algorithm~\ref{alg:cccp} can be established by adapting proof techniques from the convergence analysis of the MISO algorithm in the convex case~\citep[Prop. 2.7]{MISO} to g-convex Riemannian \dc problems. Our non-asymptotic convergence analysis requires us to place some regularity assumptions on the gradient $\nabla h(x)$, which we discuss below. 

We first recall the notion of first-order surrogates and recall some of their basic properties~\citep{MISO}. 
\begin{defn}[First-order surrogate functions]
\label{def:surrogate}
Let $\psi: \Mc \rightarrow \reals$. We say that $\psi$ is a \emph{first-order surrogate} of $\phi$ near $x \in \Mc$, if
\begin{enumerate}
    \item $\psi(z) \geq \phi(z)$ for all minimizers $z$ of $\psi$;
    \item the approximation error $\theta (z) := \psi(z) - \phi(z)$ is $L$-smooth, $\theta(x)=0$ and $\nabla \theta (x) =0$.
\end{enumerate}
\end{defn}
\begin{lemma}\label{lem:surrogate-prop}
Let $\psi$ be a first-order surrogate of $\phi$ near $x \in \Mc$. Let further $\theta (z) := \psi(z) - \phi(z)$ be $L$-smooth and $z' \in \Mc$ a minimizer of $\psi$. Then:
\begin{enumerate}
    \item $\vert \theta (z) \vert \leq \frac{L}{2} \norm{x-z}^2$;
    \item $\phi(z') \leq \phi(z) + \frac{L}{2} \norm{x-z}^2$.
\end{enumerate}
\end{lemma}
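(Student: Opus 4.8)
The plan is to prove the two claims of Lemma~\ref{lem:surrogate-prop} directly from the two defining properties of a first-order surrogate in Definition~\ref{def:surrogate}, using only the standard descent/approximation consequences of $L$-smoothness. The key fact I would invoke is the quadratic bound that $L$-smoothness of the error $\theta$ implies: for any $z$,
\begin{equation*}
  \bigl| \theta(z) - \theta(x) - \ip{\nabla\theta(x)}{z-x} \bigr| \le \tfrac{L}{2}\norm{x-z}^2.
\end{equation*}
This is the one external ingredient; everything else is bookkeeping against the surrogate axioms.

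For the first claim, I would substitute the anchoring conditions $\theta(x)=0$ and $\nabla\theta(x)=0$ from property~2 of Definition~\ref{def:surrogate} into the quadratic bound above. The linear and constant terms vanish, leaving exactly $|\theta(z)| \le \tfrac{L}{2}\norm{x-z}^2$, which is the desired inequality. This step is essentially immediate once the $L$-smoothness consequence is written down.

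For the second claim, the idea is to chain the surrogate's minimality property with the first claim. Since $z'$ is a minimizer of $\psi$, property~1 of Definition~\ref{def:surrogate} gives $\psi(z') \ge \phi(z')$, and minimality gives $\psi(z') \le \psi(z)$ for the comparison point $z$. I would then write $\phi(z') \le \psi(z') \le \psi(z) = \phi(z) + \theta(z)$, and bound $\theta(z) \le |\theta(z)| \le \tfrac{L}{2}\norm{x-z}^2$ using the first claim. Combining these yields $\phi(z') \le \phi(z) + \tfrac{L}{2}\norm{x-z}^2$.

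I do not expect a genuine obstacle here, since the lemma is a clean restatement of the MISO surrogate properties; the only point requiring a little care is matching the roles of $z$ and $z'$ correctly (the bound compares the objective at the surrogate's minimizer $z'$ against the objective at an arbitrary point $z$, with the error controlled at the anchor $x$), and ensuring that property~1 is applied to the minimizer $z'$ rather than to $z$. The mild subtlety worth flagging is that we only use $\psi(z')\ge\phi(z')$ at the minimizer, which is precisely the weaker form in which property~1 is stated, so no stronger global majorization $\psi \ge \phi$ is needed.
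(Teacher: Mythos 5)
Your proof is correct and follows essentially the same route as the paper's: part (1) via the quadratic consequence of $L$-smoothness of $\theta$ together with $\theta(x)=0$, $\nabla\theta(x)=0$, and part (2) via the chain $\phi(z')\le\psi(z')\le\psi(z)=\phi(z)+\theta(z)$ combined with part (1). (Your sign in $\psi(z)=\phi(z)+\theta(z)$ is in fact the correct one; the paper's appendix has a harmless sign typo there.)
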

The proof of this lemma is straightforward. For completeness, we provide a proof in the appendix. We can now state our main convergence results:

\begin{theorem}
\label{prop:conv}
Let $d(x,x^*) \leq R$ for all $x \in \Mc$ and $\phi(x) \leq \phi(x_0)$. If the functions $Q(x,x_k)$ in Alg.~\ref{alg:cccp} are first-order surrogate functions, then
\begin{equation}
    \phi(x_k) - \phi(x^*) \leq \frac{2L\alpha_{\Mc}^2(R)}{k+2} \qquad \forall k \geq 1.
\end{equation}
\end{theorem}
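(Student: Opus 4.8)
The plan is to adapt the convex-case analysis of \citep[Prop.~2.7]{MISO} to the present g-convex Riemannian setting, using Lemma~\ref{lem:surrogate-prop} to convert a single CCCP step into a contraction of the optimality gap. The starting point is the per-iteration inequality supplied by the second part of Lemma~\ref{lem:surrogate-prop}: since $x_{k+1}$ is a minimizer of $Q(\cdot,x_k)$, and $Q(\cdot,x_k)$ is by hypothesis a first-order surrogate of $\phi$ near $x_k$ (with $L$-smooth error $\theta$), we obtain, for every comparison point $z\in\Mc$, that $\phi(x_{k+1}) \le \phi(z) + \tfrac{L}{2}\norm{x_k-z}_2^2$. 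Writing $g_k := \phi(x_k)-\phi(x^*)$, the whole problem reduces to choosing $z$ so that both $\phi(z)$ and $\norm{x_k-z}_2$ are simultaneously small.

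The natural choice is the point $z_t := \gamma(t;x_k,x^*)$ on the minimizing geodesic joining $x_k$ to $x^*$, with $t\in[0,1]$ to be optimized. First I would invoke g-convexity of $\phi$ — equivalently, convexity of $t\mapsto\phi(\gamma(t;x_k,x^*))$, which follows from \eqref{eq:L2} — to get $\phi(z_t) \le (1-t)\phi(x_k) + t\,\phi(x^*)$. Next I would control the ambient chord $\norm{x_k-z_t}_2$ via Assumption~\ref{ass}: since $z_t$ lies on a constant-speed geodesic, $d(x_k,z_t) = t\,d(x_k,x^*) \le tR$, which I would combine with Assumption~\ref{ass} to obtain $\norm{x_k-z_t}_2^2 \le t^2\,\alpha_\Mc^2(R)$. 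Substituting both bounds into the surrogate inequality and subtracting $\phi(x^*)$ yields the scalar recursion $g_{k+1} \le (1-t)g_k + \tfrac{L}{2}t^2\alpha_\Mc^2(R)$, valid for all $t\in[0,1]$.

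From here the argument is purely one-dimensional. Minimizing the right-hand side over $t$ gives the interior optimum $t^\star = g_k/(L\alpha_\Mc^2(R))$, and whenever $t^\star\le 1$ this produces $g_{k+1} \le g_k - \tfrac{g_k^2}{2L\alpha_\Mc^2(R)}$. The condition $t^\star\le 1$ holds from the first iteration onward: taking $t=1$ in the recursion already forces $g_1 \le \tfrac{L}{2}\alpha_\Mc^2(R) \le L\alpha_\Mc^2(R)$, and the quadratic recursion then keeps $g_k\le L\alpha_\Mc^2(R)$ for all subsequent $k$. I would close with the standard $1/k$ induction: setting $C:=L\alpha_\Mc^2(R)$, one checks that $u\mapsto u-u^2/(2C)$ is increasing on $[0,C]$ and verifies by induction that $g_k\le 2C/(k+2)$, the inductive step reducing to the elementary inequality $(k+1)(k+3)\le(k+2)^2$. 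This delivers exactly $\phi(x_k)-\phi(x^*) \le 2L\alpha_\Mc^2(R)/(k+2)$ for $k\ge 1$.

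The step I expect to require the most care is the chord bound $\norm{x_k-z_t}_2^2\le t^2\alpha_\Mc^2(R)$: this is precisely where Assumption~\ref{ass} and the geometry of $\Mc$ enter, and securing the crucial quadratic-in-$t$ factor (rather than a bare $\alpha_\Mc^2(R)$, which would collapse the rate to a constant) relies on the geodesic-distance scaling $d(x_k,z_t)=t\,d(x_k,x^*)$ together with a scaling/monotonicity property of $\alpha_\Mc$ that must be made explicit. A secondary point to handle cleanly is the standing hypothesis that the iterates stay within geodesic radius $R$ of $x^*$ — this is where the assumption $\phi(x_k)\le\phi(x_0)$ is used, to confine the sequence to the relevant sublevel set — along with the treatment of the initial step $k=0$, where $g_0$ may exceed $C$ and one relies on the $t=1$ bound for $g_1$.
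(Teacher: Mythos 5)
Your proposal is correct and follows essentially the same route as the paper: the per-iteration bound from Lemma~\ref{lem:surrogate-prop}(ii), restriction of the comparison point to the geodesic $\gamma(t;x_k,x^*)$ with g-convexity and Assumption~\ref{ass} (Nesterov's technique), and the resulting scalar recursion $g_{k+1}\le(1-t)g_k+\tfrac{L}{2}t^2\alpha_\Mc^2(R)$ resolved by optimizing $t$ and a standard $1/k$ induction. The scaling issue you flag --- needing $\alpha_\Mc(t\,d)\le t\,\alpha_\Mc(d)$ rather than just boundedness of $\alpha_\Mc$ --- is present in the paper's own argument as well, so it is a fair observation but not a departure from the published proof.
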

To prove this theorem, we first derive a condition under which the CCCP oracle is defined via first-order surrogates:
\begin{lemma}
\label{lem:surrogate}
The function $Q(x,x_k)$ as defined in Algorithm~\ref{alg:cccp} is a first-order surrogate of $\phi$ near $x_k$, if $h$ is $L$-smooth.
\end{lemma}
\begin{proof}
  We have to show that $Q(x,x_k)$ fulfills all conditions in Definition~\ref{def:surrogate}. Note that, by construction, condition (1) is fulfilled, i.e., $Q(x, x_k) \geq \phi(x)$ for all $x$ and hence also for all minimizers. Let 
  \begin{align*}
      \theta (x) &:= Q(x,x_k) - \phi(x) \\
      &= h(x) - h(x_k) - \ip{\nabla h(x_k)}{x-x_k} \; .
  \end{align*}
  We see that this is (Euclidean) $L$-smooth whenever $h$ is (Euclidean) $L$-smooth. 
  Moreover, $\theta(x_k)=0$, and $\nabla \theta(x_k) =0$. 
\end{proof}
We can now prove Thm.~\ref{prop:conv}:
\begin{proof}
  Using the assumption that $Q(x,x_k)$ is a first-order surrogate of $\phi$ at $x_k$, Lem.~\ref{lem:surrogate} together with Lem.~\ref{lem:surrogate-prop}(ii) implies 
  \begin{align*}
      \phi(x_k) &\leq \min_{x \in \Mc} \left[ \phi(x) + \frac{L}{2} \norm{x - x_{k-1}}^2 \right] \\
      &\leq \min_{x \in \Mc} \big[ \phi(x) + \frac{L}{2} \alpha_{\Mc}^2(d(x,x_{k-1})) \big] \; ,
  \end{align*}
  where the second inequality follows from Assumption~\ref{ass}.
  We now follow Nesterov's classical proof technique~\citep{nesterov2013gradient} to see that
  \begin{align*}
    \phi(x_k) &\le \min_{s \in [0,1]}\left[\phi(\gamma(s; x_{k-1}, x^*)) + \tfrac{L s^2}{2} \alpha_\Mc^2(d(x^*,x_{k-1}))\right]\\
    &\le \min_{s \in [0,1]}\left[s\phi(x^*) + (1-s)\phi(x_{k-1}) + \tfrac{L s^2}{2} \alpha_\Mc^2(R) \right] \; ,
  \end{align*}
  where we have replaced the minimization over $x \in \Mc$ with minimization over the geodesic $\gamma(s; x_{k-1},x^*)$ and inserted the bound $d(x^*,x_{k-1})\leq R$. 
   Since $\phi(x_k)$ is a monotonically decreasing sequence, we can invoke the bounded level-set assumption ($\phi(x) \leq \phi(x_0) \; \forall x \in \Mc$) to obtain
  \begin{align}\label{eq:11}
    \phi(x_k)-\phi(x^*) &\le \min_{s \in [0,1]}\left[(1-s)(\phi(x_{k-1})-\phi(x^*)) \right. \\ 
    &\qquad \left. + \half L \alpha_{\Mc}^2(R) s^2 \right]. \nonumber
  \end{align}
  Let $\Delta_k := \phi(x_k)-\phi(x^*)$. We now have two cases:
  \begin{enumerate}
   \item If $\Delta_{k-1} \ge L\alpha_{\Mc}^2(R)$, then the optimal value of $s$ in~\eqref{eq:11} is 1, whereby we immediately have $\Delta_k \le \half L\alpha_{\Mc}^2(R)$.
  \item Otherwise, $s^* = \frac{\Delta_{k-1}}{L\alpha_{\Mc}^2(R)}$, which implies $\Delta_k \le \Delta_{k-1}\left(1- \frac{\Delta_{k-1}}{2L\alpha_{\Mc}^2(R)} \right)$ or equivalently
  \begin{align*}
      \Delta_k^{-1} &\geq \Delta_{k-1}^{-1} \left(1 - \frac{\Delta_{k-1}}{2L\alpha_{\Mc}^2(R)} \right) \\
      &\geq \Delta_{k-1}^{-1} + \frac{1}{2L\alpha_{\Mc}^2(R)} \; .
  \end{align*}
  Here, the second inequality follows from $(1-x)^{-1} \geq 1 + x \; \forall x \in (0,1)$.
  \end{enumerate}
  The claim follows from iteratively applying the two inequalities.
  %
  %
\end{proof}
\subsection{Solving the CCCP oracle}
The complexity of Alg.~\ref{alg:cccp} relies crucially on the complexity of the CCCP oracle. In the following section, we discuss several instances where the CCCP oracle has a closed form solution, resulting in a competitive algorithm. 

However, in general, a closed-form solution may not always be available. In this section, we discuss two instances of this setting. First, we investigate an \emph{inexact variant} of Alg.~\ref{alg:cccp}, where we solve the CCCP oracle only approximately. Secondly, we investigate a CCCP approach that exploits \emph{finite-sum structure} (Alg.~\ref{alg.2}), which we encounter in many problems of the form~\ref{eq:1}.

\subsubsection{Inexact CCCP oracle}
In general, we may only be able to solve the CCCP oracle approximately. Therefore, we complement our analysis of Alg.~\ref{alg:cccp} with the study of an \emph{inexact} variant. We assume that in iteration $k$, we perform an inexact CCCP update, i.e., we compute an $\epsilon$-approximate minimum
\begin{equation}
\label{eq:approx-Q}
    \tilde{Q}_k \leq \min_{x \in \Mc} Q(x,x_k) +\frac{1}{2}L\alpha_{\Mc}^2(R) s_k^2 \epsilon \; .
\end{equation}
A simple adaption of our convergence proof above gives the following non-asymptotic guarantee:
\begin{theorem}
\label{prop:conv-inexact}
Let $d(x,x^*) \leq R$ for all $x \in \Mc$, $\phi(x) \leq \phi(x_0)$ and let $Q(x,x_k)$ be first-order surrogate functions. Let  $\big(\tilde{Q}_k \big)_{k \geq 0}$ be a sequence of $\epsilon$-approximate CCCP updates in the sense of Eq.~\ref{eq:approx-Q}. 
Then
\begin{equation}
    \phi(x_k) - \phi(x^*) \leq \frac{2L \alpha_{\Mc}^2(R) (1+\epsilon)}{k+2} \qquad \forall \;  k \geq 1.
\end{equation}
\end{theorem}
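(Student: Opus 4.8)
The plan is to mirror the proof of Theorem~\ref{prop:conv} almost verbatim, tracking the extra error term introduced by solving the CCCP oracle only approximately. The key observation is that the inexactness condition~\eqref{eq:approx-Q} is deliberately written so that the error incurred at step $k$ is proportional to the very quantity $\tfrac{1}{2}L\alpha_\Mc^2(R)s_k^2$ that already appears in the exact analysis. This means the approximate update simply inflates the quadratic penalty term by a factor of $(1+\epsilon)$ rather than introducing a structurally new term, which is what will ultimately let the $(1+\epsilon)$ factor propagate cleanly into the final rate.

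First I would reproduce the opening step: using that $Q(x,x_k)$ is a first-order surrogate, Lemma~\ref{lem:surrogate-prop}(ii) combined with Assumption~\ref{ass} bounds $\phi(x_k)$ by $\min_{x\in\Mc}[\phi(x) + \tfrac{L}{2}\alpha_\Mc^2(d(x,x_{k-1}))]$ in the exact case. In the inexact case, the minimizer is only found up to the additive slack in~\eqref{eq:approx-Q}, so the same chain yields
\begin{equation*}
  \phi(x_k) \le \min_{s\in[0,1]}\left[\phi(\gamma(s;x_{k-1},x^*)) + \tfrac{Ls^2}{2}\alpha_\Mc^2(R)(1+\epsilon)\right].
\end{equation*}
Next I would invoke g-convexity along the geodesic exactly as in the main proof, using $\phi(\gamma(s;x_{k-1},x^*)) \le s\phi(x^*) + (1-s)\phi(x_{k-1})$ and the bound $d(x^*,x_{k-1})\le R$, to arrive at the analogue of~\eqref{eq:11} with the constant $L\alpha_\Mc^2(R)$ everywhere replaced by $L\alpha_\Mc^2(R)(1+\epsilon)$.

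From that point the argument is purely mechanical: setting $\Delta_k := \phi(x_k)-\phi(x^*)$ and optimizing the quadratic-in-$s$ bound, the two-case analysis (whether $\Delta_{k-1}$ exceeds $L\alpha_\Mc^2(R)(1+\epsilon)$ or not) goes through with the inflated constant, giving the recursion $\Delta_k^{-1} \ge \Delta_{k-1}^{-1} + \tfrac{1}{2L\alpha_\Mc^2(R)(1+\epsilon)}$. Telescoping this over $k$ iterations then produces the claimed rate $\Delta_k \le \tfrac{2L\alpha_\Mc^2(R)(1+\epsilon)}{k+2}$, since the $(1+\epsilon)$ rides along as an overall scaling of the constant.

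I expect the only genuine subtlety — the ``hard part,'' such as it is — to be verifying that the slack term in the inexactness definition~\eqref{eq:approx-Q} lines up precisely with the $s_k^2$ that emerges from the surrogate bound, so that the error truly collapses into a multiplicative $(1+\epsilon)$ rather than an uncontrolled additive perturbation. This is really a matter of confirming that the $s_k$ appearing in~\eqref{eq:approx-Q} is the same $s$ being optimized over in the geodesic minimization; once that alignment is confirmed, no Riemannian machinery beyond what Theorem~\ref{prop:conv} already uses is needed, and the rest is routine.
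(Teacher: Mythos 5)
Your proposal is correct and follows essentially the same route as the paper: carry the inexactness slack from~\eqref{eq:approx-Q} through the surrogate bound so that it merges with the quadratic term into a factor of $(1+\epsilon)$, obtain the analogue of~\eqref{eq:11} with $L\alpha_\Mc^2(R)$ replaced by $L\alpha_\Mc^2(R)(1+\epsilon)$, and rerun the two-case step-size analysis and telescoping from Theorem~\ref{prop:conv}. The subtlety you flag --- that the $s_k$ in the inexactness definition must coincide with the $s$ optimized over in the geodesic minimization --- is indeed the one point where the paper's definition is tailored to make the argument go through, and you handle it exactly as the paper does.
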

The proof is a simple adaption of the proof of Thm.~\ref{prop:conv} and can be found in the appendix.

\subsubsection{Exploiting Finite-sum Structure}
\begin{algorithm*}
 \caption{Incremental Majorization-Minimization scheme for Riemannian \dc with finite-sum structure} 
  \label{alg.2}
  \begin{algorithmic}[1]
    \State \textbf{Input:} $x_0 \in \Mc$, K
    \State Set $g^0(x) := \frac{1}{m} \sum_{i=1}^m h_i(x_0) - \ip{\nabla h_i(x_0)}{x-x_0}$.
    \For{$k=1,\ldots,K$}
        \State Choose $i_k \sim [m]$ randomly. 
        \State Set $g_{i_k}^k(x) := h_{i_k}(x_k) - \ip{\nabla h_{i_k}(x_k)}{x-x_k}$ and $g_i^k \triangleq g_i^{k-1}$ for $i \neq i_k$.
        \State Set $Q(x, x_k) := f(x) - g^k(x)$.
        \State $x_{k+1} \gets \argmin_{x \in \Mc} Q(x,x_k)$.
    \EndFor
    \State \textbf{Output:} $x_K$
  \end{algorithmic}
\end{algorithm*}
In applications, we frequently encounter a version of problem~\ref{eq:1}, where $h$ has a finite-sum structure, i.e., is given by $h(x):=\frac{1}{m}\sum_{i=1}^m h_i(x)$, where the $h_i$ are $L$-smooth. Notice that in this case, computing the CCCP step requires $m$ gradient evaluations, which may be expensive, if $m$ is large. Instead of recomputing the full surrogate as in Alg.~\ref{alg:cccp}, we could make only incremental updates to the surrogate in each iteration. We outline an incremental update scheme in Alg.~\ref{alg.2},
which requires only \emph{one}, instead of $m$ gradient evaluations, significantly reducing the complexity of the CCCP oracle.

We note that several of the applications presented in section~\ref{sec:appl} have a finite-sum structure. However, in those cases, the CCCP oracle can actually be solved in closed-form, resulting in a very competitive implementation of Alg.~\ref{alg:cccp}.

For the convergence analysis we again follow closely the analysis of the MISO algorithm~\citep[Prop. 3.1]{MISO}. We show the following result:
\begin{theorem}\label{thm:finite-sum}
Let again $d(x,x^*) \leq R$ for all $x \in \Mc$ and $\phi(x) \leq \phi(x_0)$. Assume that $g_{i_k}^k$ as defined in Alg.~\ref{alg.2} is a first-order surrogate of $h_{i_k}$ near $x_{k-1}$.
Then Alg.~\ref{alg.2} converges almost surely.
\end{theorem}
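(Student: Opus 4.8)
The plan is to follow the incremental-surrogate analysis of MISO~\citep[Prop. 3.1]{MISO}, recast for the \dc setting. First I would set up the aggregate surrogate and pin down the direction implicit in the statement: since each $h_i$ is convex and $L$-smooth, its first-order (linear) model $g_i^k$ at the base point is a global \emph{under}-estimator, so the gap $\theta_i^k := h_i - g_i^k$ is nonnegative, $L$-smooth, and vanishes to first order at the base point. Hence $Q(\cdot,x_k) = f - \tfrac1m\sum_i g_i^k =: \Psi_k$ is a genuine first-order \emph{majorant} of $\phi = f - \tfrac1m\sum_i h_i$, with aggregate error $\Psi_k - \phi = \tfrac1m\sum_i\theta_i^k =: \bar\theta^k \ge 0$. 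I write $\bar g_k := \Psi_{k-1}(x_k) = \min_{x\in\Mc}\Psi_{k-1}(x)$ for the minimal surrogate value (note $x_k = \argmin_x \Psi_{k-1}$), and let $\mathcal{F}_{k-1}$ be the $\sigma$-algebra of the history up to the computation of $x_k$, so that $i_k$ is drawn uniformly and independently of $\mathcal{F}_{k-1}$.

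The first key step is a deterministic monotone descent of the surrogate minima. From step $k-1$ to $k$ only the $i_k$-th component of the aggregate changes, and the fresh model is exact at its base point, $g_{i_k}^k(x_k) = h_{i_k}(x_k)$; hence
\begin{equation*}
  \Psi_k(x_k) = \Psi_{k-1}(x_k) - \tfrac1m\big(h_{i_k}(x_k) - g_{i_k}^{k-1}(x_k)\big) = \bar g_k - \tfrac1m\theta_{i_k}^{k-1}(x_k).
\end{equation*}
Since $x_{k+1}$ minimizes $\Psi_k$, this gives $\bar g_{k+1} = \Psi_k(x_{k+1}) \le \Psi_k(x_k) = \bar g_k - \tfrac1m\theta_{i_k}^{k-1}(x_k) \le \bar g_k$. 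Because $\Psi_k \ge \phi \ge \phi(x^*)$ on $\Mc$, the sequence $(\bar g_k)$ is nonincreasing and bounded below, so it converges on every sample path; telescoping also yields $\sum_{k}\theta_{i_k}^{k-1}(x_k) < \infty$ surely.

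The second step injects the randomness to pass from the selected component to the full objective. As $x_k$ and all current models are $\mathcal{F}_{k-1}$-measurable while $i_k$ is uniform and independent,
\begin{equation*}
  \E\big[\theta_{i_k}^{k-1}(x_k)\,\big|\,\mathcal{F}_{k-1}\big] = \tfrac1m\sum_{i=1}^m \theta_i^{k-1}(x_k) = \bar\theta^{k-1}(x_k) = \bar g_k - \phi(x_k) \ge 0.
\end{equation*}
Taking expectations in the descent inequality gives $\tfrac1m\E[\bar g_k - \phi(x_k)] \le \E[\bar g_k] - \E[\bar g_{k+1}]$; summing and using the boundedness of $(\bar g_k)$ shows $\sum_k \E[\bar g_k - \phi(x_k)] < \infty$. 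By Tonelli and nonnegativity, $\sum_k(\bar g_k - \phi(x_k)) < \infty$ almost surely, so $\bar g_k - \phi(x_k) \to 0$ a.s. Combined with the sure convergence $\bar g_k \to \bar g_\infty$, this yields $\phi(x_k) \to \bar g_\infty$ almost surely, which is the asserted convergence. (Equivalently, the last two displays verify the Robbins--Siegmund conditions for $(\bar g_k)$.)

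The main obstacle is precisely this transfer step: the stale components $\theta_i^{k-1}(x_k)$ for $i\ne i_k$ are built at old iterates and need not be individually small, so a pathwise argument fails; the uniform independent sampling is exactly what makes $\E[\theta_{i_k}^{k-1}(x_k)\mid\mathcal{F}_{k-1}]$ equal the \emph{full} aggregate gap $\bar g_k - \phi(x_k)$, and the measurability bookkeeping — that $i_k$ is chosen \emph{after} $x_k$ is fixed — must be handled with care. A secondary, optional step is to identify $\bar g_\infty$ with $\phi(x^*)$: here I would combine the smoothness bound $\theta_i^k(x) \le \tfrac{L}{2}\norm{x - \mathrm{base}_i}^2$ from Lemma~\ref{lem:surrogate-prop}(i) with the bounded-domain hypothesis $d(x,x^*)\le R$ and Assumption~\ref{ass} to control the aggregate gap, and then invoke g-convexity to conclude that limit points of $(x_k)$ are global minimizers. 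For the bare almost-sure convergence claimed in the theorem, however, the first three steps already suffice.
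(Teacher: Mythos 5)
Your proposal is correct and follows essentially the same route as the paper's proof (itself modeled on MISO, Prop.~3.1): deterministic monotone descent of the aggregate surrogate minima, then a conditional-expectation/Beppo Levi argument to show the aggregate approximation gap vanishes almost surely, with the identification of the limit as $\phi(x^*)$ left as a final appeal to the exact-oracle analysis. Your version is in fact a bit more careful than the paper's on two points the paper glosses over — you track the surrogate of $\phi$ (namely $f - g^k$) rather than of $h$ alone when invoking the argmin step, and you make the measurability of $x_k$ relative to the draw of $i_k$ explicit — but these are refinements of the same argument, not a different one.
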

We defer the proof details to the appendix.

\section{Applications}
\label{sec:appl}
In this section we present several applications that possess the \dc structure~\eqref{eq:1}. All our examples are drawn from the manifold of positive definite matrices, since a large number of practical matrix estimation problems are known in this setting~\citep{wiesel2012geodesic,sra2015conic}, and it serves to best illustrate the practical aspects. 

Importantly, for the applications presented here, our framework provides a simple and competitive algorithm. In all cases, non-asymptotic convergence guarantees are obtained from a much simpler analysis. Our ability to solve the CCCP oracle (line 4, Alg.~\ref{alg:cccp}) in closed-form renders our approach into a practical method that is attractive for downstream applications.



%



\subsection{Matrix scaling}
In the case of diagonal positive definite matrices, g-convexity reduces to ordinary convexity after a global change of variables and one obtains convex geometric programming~\citep{boyd2007tutorial}. A canonical g-convex example is the problem of matrix scaling, for which perhaps the best known method is the classical Sinkorn algorithm~\citep{sinkhorn}, though the problem has witnessed considerable recent interest too~\citep{allen2017much,cohen2017matrix,altschuler2017near}. We comment only on the most basic version of the problem; see \citep{yuille2003concave} for more details.

We are given an $n\times n$ positive matrix $M$ for which we seek to compute diagonal scaling matrices $D$ and $E$ such that $DME$ is doubly stochastic, i.e., its rows and columns sum to $1$. Sinkhorn's algorithm is known to be obtained by applying CCCP to minimize the cost function
\begin{equation}
  \label{eq:ap:1}
  \min_{x > 0}\ \ \phi(x) := -\nlsum_j \log x_j + \nlsum_i\log\left(\nlsum_j x_j M_{ij}\right) \; .
\end{equation}
Here, $\{x_j\}$ are the diagonal elements of $E$, while the diagonal elements of $D$ are $1/\{\sum_j x_jM_{ij}\}$. Observe that $\phi(x)$ given by~\eqref{eq:ap:1} is actually g-convex,\footnote{This observation follows immediately from the  g-convexity of the BL problem~\eqref{eq:2}, of which problem~\eqref{eq:ap:1} is known to be a special case.} and thus this problem is indeed of the form~\eqref{eq:1}. Further, it can be verified that the $h(x)$ part of~\eqref{eq:ap:1} satisfies the $L$-smoothness assumption, since the logarithm is L-smooth on a domain with a positive lower bound.

\subsection{Tyler's M-estimator}
Estimating the shape of a covariance matrix for high-dimensional data is an important problem in statistics. One important class of covariance estimators, based on elliptically contoured distributions, is Tyler's M-estimator\\ \citep{ollila2014regularized}. There are several important asymptotic properties of this estimator, and it has been extensively studied; for additional details and discussion we refer the reader to the papers~\citep{franks2020rigorous,sra2015conic,wiesel2012geodesic,wiesel2015structured,ollila2014regularized,zhang2016robust,tyler1987distribution}. The best known algorithms for computing Tyler's M-estimator arise from carefully constructed fixed-point iterations. The convergence analysis of those fixed-point iterations utilize the Hilbert projective metrics, in a manner analogous to Birkhoff's use of the Hilbert projective metric for the convergence analysis of problems closely related to matrix scaling~\citep{birkhoff1957extensions}. Following our discussion above, Algorithm~\ref{alg:cccp} delivers a transparent method for obtaining Tyler's estimator by solving~\eqref{eq:tyler}, at least in the cases where g-convexity applies; see also \citep{sra2015conic} for additional discussion.

The resulting optimization problem involves obtaining a scatter matrix by maximizing a likelihood of the form
\begin{equation}
\label{eq:tyler}
\mathcal{L}(X, A) := -\frac{n}{2}\log\det(X) + \nlsum_i \log f(a_i^T X^{-1}a_i),
\end{equation}
where $f$ is a so-called ``distance generating function''. The likelihood~\eqref{eq:tyler} generalizes the usual multivariate Gaussian to the much larger class of Elliptically contoured distributions. Assuming that $\log f$ is concave and monotonic, it is easily seen that~\eqref{eq:tyler} can be equivalently written as a g-convex minimization problem (after reversing signs) of the form~\eqref{eq:1}. Empirically, the much faster run times obtained via CCCP (which yields a fixed-point iteration for solving~\eqref{eq:tyler}) has been explicitly highlighted in~\citep{sra2015conic,hosseini2016inference}.

\subsection{Matrix square root and barycenter of PD matrices}\label{sec:4.3}
\begin{figure*}[ht]
    \centering
    \hfill
    \includegraphics[width=0.4\textwidth]{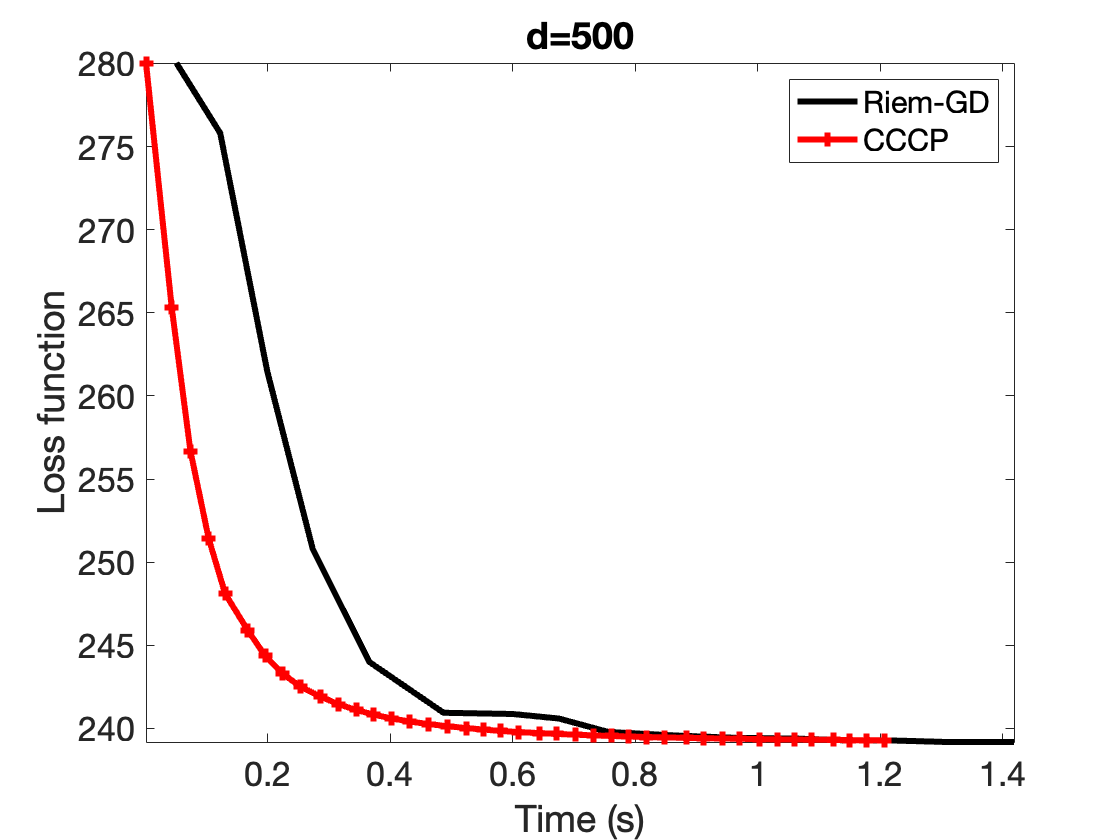}
    \hfill
    \includegraphics[width=0.4\textwidth]{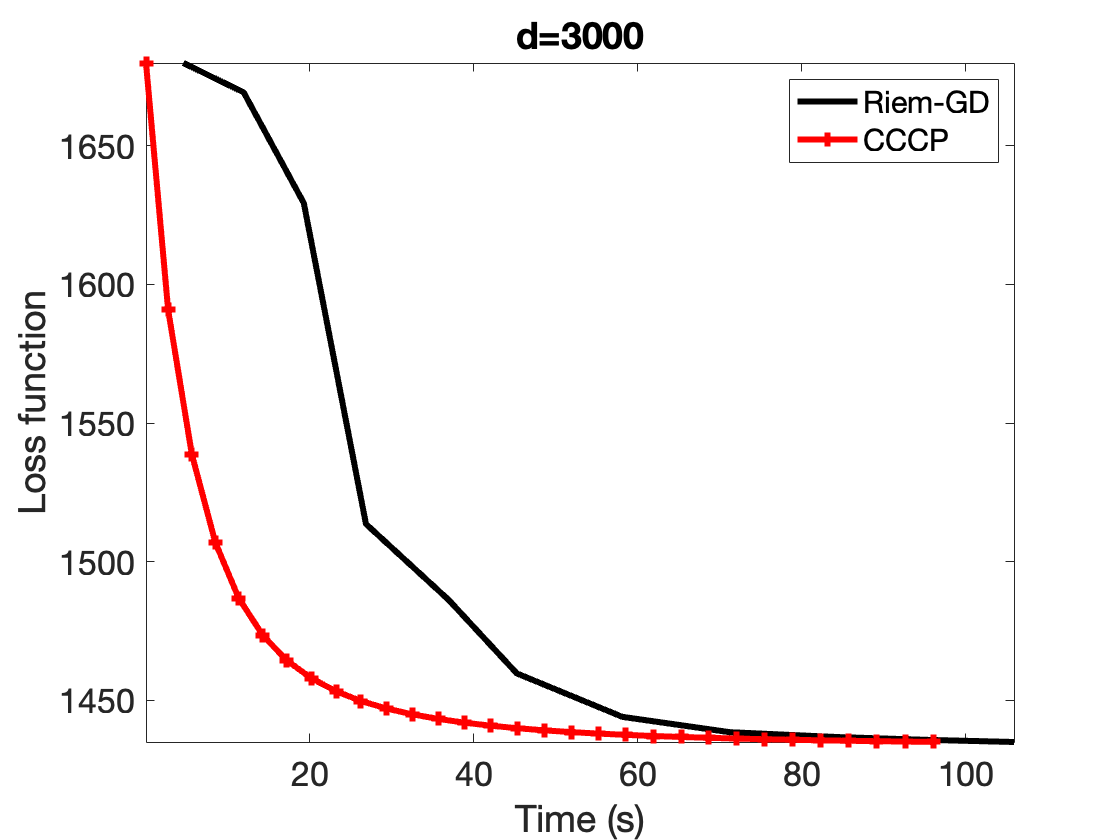}
    \hfill
    \caption{Performance of CCCP approach in comparison with Riemannian Gradient Descent for computing matrix square roots with  inputs of dimension $d$.}
    \label{fig:sqrt}
\end{figure*}
The S-Divergence~\citep{sdiv} between two positive definite matrices $X, Y$ is defined as
\begin{equation}
\label{eq:sdiv}
    \sdiv(X,Y) := \log\det\left(\tfrac{(X+Y)}{2}\right)-\half\log\det(X)-\half\log\det(Y).
\end{equation}
\paragraph{Matrix square root.} Suppose $M$ is a positive definite matrix. In~\citep{jain2017global} the authors proposed a gradient descent based method to compute the square root of $M$. A faster algorithm was obtained in~\citep{sra2016matrix} who proposed the following iteration 
\begin{equation*}
    X \gets (X+I)^{-1} + (X+M)^{-1},
\end{equation*}
which was obtained as a certain fixed-point iteration to compute the \emph{barycenter} 
\begin{equation}
\label{eq:mtxroot}
    \min_{X \succ 0}\quad \sdiv^2(X,I)+\sdiv^2(X,M).
\end{equation}

\paragraph{Barycenter.} More generally, in~\citep{sdiv} the barycenter version of \eqref{eq:mtxroot} is studied. Here, given $n$ positive definite matrices $A_1,\ldots,A_n$, one seeks to solve
\begin{equation}
\label{eq:smean}
    \min_{X \succ 0}\quad \sum_{i=1}^n w_i \sdiv^2(X,A_i).
\end{equation}
Using the defintion~\eqref{eq:sdiv} of $\sdiv$, it is immediate that \eqref{eq:smean} is a difference of Euclidean convex functions; its g-convexity is more involved but follows from~\citep{sdiv}. By applying our CCCP Algorithm, one immediately recovers a proof of convergence for the fixed-point iteration proposed in~\citep{sdiv} for solving~\eqref{eq:smean}.

\subsection{Brascamp-Lieb Constant}\label{sec:4.4}
\begin{figure*}[ht]
    \centering
    \hfill
    \includegraphics[width=0.4\textwidth]{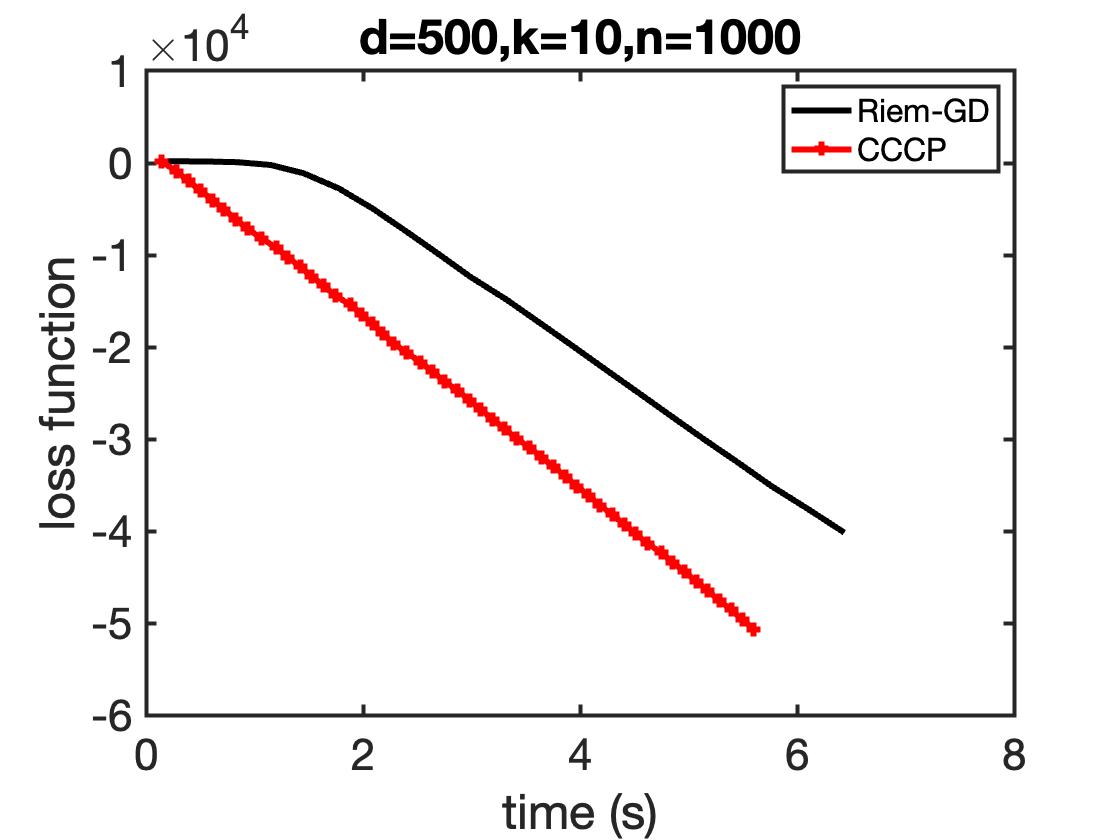}
    \hfill
    \includegraphics[width=0.4\textwidth]{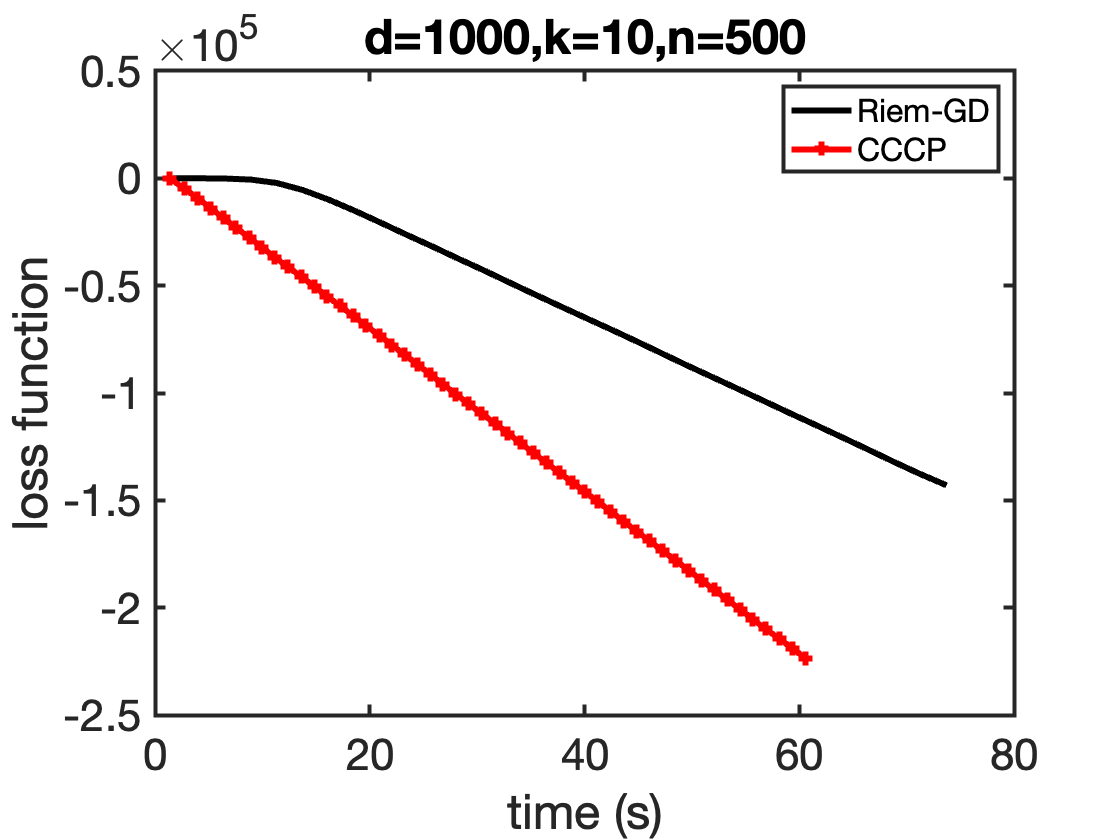}
    \hfill
    \caption{Performance of CCCP approach in comparison with Riemannian Gradient Descent for computing Brascamp-Lieb constants for $n$ input matrices of size $d \times k$.}
    \label{fig:BL}
\end{figure*}
Now we come to what is perhaps the most interesting application of the problem structure~\eqref{eq:1}. Indeed, as previously, this application is the one that motivated us to develop the method studied in this paper. Specifically, we study Brascamp-Lieb (short: BL) inequalities that form a central class of inequalities in functional analysis and probability theory, offering a great generalization to the basic H\"older inequality, and being intimately related with entropy inequalities too. As a special instance of the Operator Scaling problem~\citep{garg2016algorithmic}, they relate to a range of problems in various areas of mathematics and theoretical computer science~\citep{bennett2008brascamp}.


The computation of BL constants can be formulated as an optimization task on $\pd_d$:
\begin{align}\label{eq:BL}
  F(X) = -\log\det(X) + \nlsum_iw_i\log\det(\Phi_i(X)),
\end{align}
where $\Phi_i(X)=A_i^* X A_i$, and $w_i \ge 0$ with $w^T1=1$.
Notably, the objective is g-convex~\citep{sra2018geodesically}, which allows for applying Algorithm~\ref{alg:cccp} with global convergence guarantees.
Since $\log\det(\Phi(X))$ is a concave function of $X$, it can be upper bounded as
\begin{equation}
  \label{eq:8}
  \log\det(\Phi_i(X)) \le \log\det(\Phi(Z)) + \trace(Z^{-1}\Phi_i(X)-Z).
\end{equation}
Using~\eqref{eq:8} we thus have the following upper bound
\begin{align*}
  F(X) &\le -\log\det(X) + \log\det(Z) + \nlsum_iw_i\trace(Z^{-1}\Phi_i(X)) - d \\
  &=: g(X,Z).
\end{align*}
The CCCP update step is
\begin{equation}
  \label{eq:9}
  X_{k+1} \gets \argmin_{X >0} g(X,X_k), 
\end{equation}
which results in iteration of the map
\begin{equation}
  \label{eq:10}
  X_{k+1} = \left[\nlsum_i w_i A_i(A_i^*X_kA_i)^{-1}A_i^*\right]^{-1}.
\end{equation}
Our analysis delivers 
non-asymptotic guarantees for computing BL constants, a result that was obtained by analyzing a more involved operator Sinkhorn iteration in~\citep{garg2016algorithmic}, as well as, more recently in~\citep{weber2022computing} with more involved tools from Finslerian geometry.

\subsection{Experiments}
To demonstrate the efficiency of our proposed approach, we complement our discussion with experimental results for two of the applications discussed above. We show that CCCP performs competitively against Riemannian Gradient Descent (RGD) for the problem of computing matrix square roots (Fig.~\ref{fig:sqrt}) and for computing Brascamp-Lieb constants (Fig.~\ref{fig:BL}). In both experiments we compare against the Manopt~\citep{boumal2014manopt} version of RGD for two different sets of hyperparameters. We do note, however, that this advantage in running time is more pronounced for larger problems, as expected.




\section{Conclusion}
We consider geodesically convex optimization problems that admit a Euclidean difference of convex (\dc) decomposition. We analyzed the global iteration complexity of Euclidean CCCP applied to solving such problems, where geodesic convexity played an important role to help bound function suboptimality, while Euclidean smoothness (of one of the \dc components) helped control the progress of CCCP. While simple, this work captures a sufficiently valuable class of nonconvex optimization problems for which CCCP can be shown to converge globally. We illustrate our ideas on several important applications where such a \dc structure arises, and for which CCCP either delivers a new convergent algorithm, or helps us explain the convergence of an existing algorithm.

An important question in this context is whether there exist an
efficiently computable \dc representation for any
geodesically convex cost function? 
Since $\Mc$ is a manifold, it is an open set. Hence, nonconvex nonsmooth functions that satisfy bounded-variation  admit a DC representation; moreover, in case $\phi \in C^2$ (i.e., twice continuously differentiable), there always is a DC representation, regardless of g-convexity~\citep{konno1997dc}.
The key challenge is whether one can efficiently find such a representation. This problem seems to be of considerable difficulty. In Appendix~1.1 we give an example of how the well-known Riemannian distance function $\riem(X,Y)=\frob{\log(X^{-1/2}YX^{-1/2})}$ on the positive definite matrices admits such a \dc representation, albeit one that seems quite intricate as it involves integrating over infinitely many functions.

We hope that our work spurs not only an investigation of the fundamental question raised above, but of better algorithms and complexity analysis for CCCP and other related procedures when applied to the class of g-convex functions studied in this paper. We believe that it should be possible to drop the dependence on the gradient Lipschitzness $L$ in the CCCP method studied in this work, but expect that a completely different approach will be needed to analyze the method. Finally, in the same vein, it will be valuable to extend our study to non-differentiable g-convex functions that enjoy a Euclidean \dc representation. We leave investigation of these important problems to the future.

\section*{Acknowledgements}
The authors thank Pierre-Antoine Absil for helpful comments on the manuscript.

\appendix

\section{\dc representation of Riemannian distance on PD matrices}
\label{app:riem}
We will need the following useful integral representation of the squared logarithm:
\begin{lemma}
  Let $x > 0$. Then, the following representation holds
  \begin{equation}
    \label{ap:5}
    (\log x)^2 = \int_0^\infty [\log(1+tx) + \log(t+x) - 2\log x - 2\log(1+t)]\frac{dt}{t}.
  \end{equation}
\end{lemma}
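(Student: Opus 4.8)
The plan is to treat the right-hand side as a function $F(x)$ of the scalar parameter $x>0$, prove that $F'(x)$ equals $\tfrac{d}{dx}(\log x)^2$, and then fix the constant of integration by matching the two sides at a single point. Before differentiating anything, I would first verify that the integral is well defined, i.e. that the bracketed numerator
\[
N(t) := \log(1+tx) + \log(t+x) - 2\log x - 2\log(1+t),
\]
divided by $t$, is integrable on $(0,\infty)$. The only possible trouble is at $t\to 0^+$ and $t\to\infty$, and the factor $1/t$ is harmless exactly when $N(t)\to 0$ there. A convenient reduction is that both $N$ and the measure $dt/t$ are invariant under the involution $t\mapsto 1/t$, so it suffices to control a single endpoint; a short Taylor expansion of the logarithms then settles whether $N(t)=O(t)$ near $0$ (and hence, by symmetry, near $\infty$). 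Confirming $N(t)\to0$ at the endpoints is the step where the precise constants multiplying $\log x$ and $\log(1+t)$ matter, and it is exactly what makes $N(t)/t$ integrable.

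Next I would differentiate $F$ under the integral sign in $x$. The three $x$-dependent terms contribute $\tfrac{t}{1+tx}$, $\tfrac{1}{t+x}$, and the derivative of the $\log x$ term; after dividing by $t$ the first becomes $\tfrac{1}{1+tx}$ and the second becomes $\tfrac{1}{t(t+x)}$, which I would split via the partial fraction $\tfrac{1}{t(t+x)}=\tfrac1x\bigl(\tfrac1t-\tfrac1{t+x}\bigr)$. The emerging $\tfrac{1}{xt}$ piece is not integrable at $t=0$ on its own and must cancel against the contribution of the $\log x$ term for $F'$ to make sense; checking this cancellation (the structural counterpart of the endpoint vanishing from the first step) is the one place the constant in front of $\log x$ is pinned down. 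Once it cancels, what remains is elementary:
\[
F'(x) = \frac1x\int_0^\infty\Bigl(\frac{1}{t+1/x}-\frac{1}{t+x}\Bigr)\,dt
= \frac1x\Bigl[\log\frac{t+1/x}{t+x}\Bigr]_0^\infty = \frac{2\log x}{x},
\]
which coincides with $\tfrac{d}{dx}(\log x)^2$. Hence $F(x)$ and $(\log x)^2$ differ by a constant.

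To fix that constant I would evaluate at $x=1$: there the numerator collapses identically in $t$, since $\log(1+t)+\log(t+1)-2\log 1-2\log(1+t)=0$, so $F(1)=0=(\log 1)^2$. Therefore the additive constant is zero and $F(x)=(\log x)^2$ for all $x>0$, as claimed.

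The main obstacle is not the algebra but the analytic bookkeeping around differentiating under the integral: I would need to exhibit a dominating function that is integrable in $t$ and locally uniform in $x$ for the differentiated integrand, and to confirm that the individually non-integrable logarithmic (resp. rational) pieces recombine into integrable ones both before and after differentiation. The $t\mapsto 1/t$ symmetry together with the endpoint Taylor estimates from the first step are precisely what make this dominated-convergence argument routine; everything downstream is a short computation.
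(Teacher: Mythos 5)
The paper states this lemma without proof, so there is no in-paper argument to compare against; judged on its own terms, your proposal has a genuine gap, and it sits exactly at the two verifications you explicitly defer. Carrying out the endpoint Taylor expansion shows that the bracketed numerator does \emph{not} vanish at $t=0$: one finds $N(0)=\log(1)+\log(x)-2\log x-2\log(1)=-\log x$, and by the $t\mapsto 1/t$ symmetry you correctly identified, also $N(t)\to-\log x$ as $t\to\infty$. Hence $N(t)/t\sim -\log x/t$ at both endpoints and the integral as printed diverges for every $x\neq 1$; the identity in the statement cannot hold as written. The same failure reappears in your differentiation step: the partial fraction $\tfrac{1}{t(t+x)}=\tfrac1x\bigl(\tfrac1t-\tfrac1{t+x}\bigr)$ contributes $+\tfrac{1}{xt}$, while the $-2\log x$ term contributes $-\tfrac{2}{xt}$, leaving an uncancelled $-\tfrac{1}{xt}$; your displayed formula for $F'(x)$ silently assumes this cancellation succeeded, which it does not with the coefficient $2$.

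The fix is that the statement should carry $-\log x$ rather than $-2\log x$ (this is the standard representation from the S-divergence literature, and it is also what the coefficient bookkeeping in the paper's subsequent theorem actually requires). With that correction, $N(0)=0$, $N(t)=O(t)$ near $0$ and $O(1/t)$ near $\infty$, the $\tfrac{1}{xt}$ pieces cancel exactly, and your computation
\begin{equation*}
F'(x)=\frac1x\int_0^\infty\Bigl(\frac{1}{t+1/x}-\frac{1}{t+x}\Bigr)dt=\frac{2\log x}{x},\qquad F(1)=0,
\end{equation*}
is correct and completes the proof, modulo exhibiting a dominating function for the differentiation under the integral sign (routine for $x$ ranging over a compact subset of $(0,\infty)$, using the decay $O(\min\{1,1/t^2\})$ of the recombined integrand). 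So your method is the right one; the step you flagged as ``the one place the constant in front of $\log x$ is pinned down'' is precisely where you needed to stop and notice that the stated constant is wrong.
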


We are now ready to state our result on the \dc representation of Riemannian distance.
\begin{theorem}
  Let $X, Y \in \pd_n$ and let $\riem(X,Y) = \frob{\log (X^{-\nhalf}YX^{-\nhalf})}$ be the Riemannian distance between them. Then, $\riem^2(X,Y)$ is g-convex jointly in $(X,Y)$ and it admits a \dc representation
  \begin{equation}
    \label{ap:2}
    \riem^2(X,Y) = \int_0^\infty [f_t(X,Y) - h_t(X,Y)]d\mu(t),
  \end{equation}
  where $f_t$ and $h_t$ are convex, and $\mu$ is a suitable measure.
\end{theorem}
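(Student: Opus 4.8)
The plan is to reduce the matrix statement to the scalar integral representation of the Lemma applied eigenvalue-by-eigenvalue, and then repackage the resulting sums of scalar logarithms as $\log\det$ terms whose convexity type can be read off directly. The joint g-convexity claim I would handle separately and first: since $\pd_n$ with the affine-invariant metric is a Cartan--Hadamard manifold (as recalled in Section~2), the squared geodesic distance is jointly geodesically convex along product geodesics. This is the standard comparison/$\mathrm{CAT}(0)$ fact and requires no computation, so I would simply invoke it; the genuine content of the theorem is the explicit \dc integral representation.

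For that, I would first write $\riem^2(X,Y)=\frob{\log(X^{-\nhalf}YX^{-\nhalf})}^2=\sum_{i=1}^n(\log\lambda_i)^2$, where $\lambda_1,\dots,\lambda_n$ are the eigenvalues of $M:=X^{-\nhalf}YX^{-\nhalf}$ (equivalently the generalized eigenvalues of the pencil $(Y,X)$). Applying the Lemma to each $(\log\lambda_i)^2$ and summing the finite family gives
\begin{equation*}
  \riem^2(X,Y)=\int_0^\infty \sum_{i=1}^n\big[\log(1+t\lambda_i)+\log(t+\lambda_i)-2\log\lambda_i-2\log(1+t)\big]\frac{dt}{t}.
\end{equation*}
I would then convert the eigenvalue sums into determinants via $\sum_i\log(1+t\lambda_i)=\log\det(I+tM)$, $\sum_i\log(t+\lambda_i)=\log\det(tI+M)$ and $\sum_i\log\lambda_i=\log\det M$, and use the congruences $I+tM=X^{-\nhalf}(X+tY)X^{-\nhalf}$, $tI+M=X^{-\nhalf}(tX+Y)X^{-\nhalf}$, $\det M=\det Y/\det X$. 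These are exact identities at each fixed $t$, so the $\log\det X$ contributions combine cleanly and, at the level of the integrand, $\riem^2$ is expressed as an affine combination of $\log\det(X+tY)$, $\log\det(tX+Y)$, $\log\det X$, $\log\det Y$, and a term constant in $(X,Y)$.

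The \dc split then follows by inspecting convexity type in $(X,Y)$. The maps $(X,Y)\mapsto X+tY$ and $(X,Y)\mapsto tX+Y$ are affine and land in $\pd_n$ for $t>0$, so $\log\det(X+tY)$ and $\log\det(tX+Y)$ are jointly \emph{concave} (the concave $\log\det$ composed with an affine map); their negatives constitute a convex $h_t$. The remaining terms form a nonnegative combination of $-\log\det X$ and $-\log\det Y$, hence are jointly \emph{convex}, and together with the $(X,Y)$-constant constitute $f_t$. Thus the integrand equals $f_t-h_t$ with both $f_t,h_t$ convex, and taking $d\mu(t)=dt/t$ yields the claimed representation as an integral over a family of \dc functions.

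The main obstacle is the convergence bookkeeping rather than any individual computation. The separate $\log\det$ pieces are not integrable against $dt/t$ (each contributes a non-integrable tail near $t=0$ or $t=\infty$), so the representation must be read as the integral of the \emph{difference} $f_t-h_t$, whose integrability is exactly what the Lemma supplies; I would therefore justify the interchange of the finite sum with the integral, and the recombination of the $\log\det X$ terms, at the level of the full integrand, invoking dominated convergence with the pointwise bound implicit in the Lemma. This is precisely why the statement asserts only a ``suitable measure'' $\mu$: one integrates a continuum of convex-minus-convex integrands, rather than forming a genuine difference of two finite convex functionals.
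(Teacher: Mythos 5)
Your proposal is correct and follows essentially the same route as the paper: reduce to the eigenvalues of $X^{-\nhalf}YX^{-\nhalf}$, apply the scalar integral representation of $(\log x)^2$ termwise, repackage the sums as $\log\det$ terms via the congruences $I+tX^{-1}Y=X^{-\nhalf}(X+tY)X^{-\nhalf}$ and $tI+X^{-1}Y=X^{-\nhalf}(tX+Y)X^{-\nhalf}$, and take $h_t(X,Y)=-\log\det(X+tY)-\log\det(tX+Y)$ as the convex subtrahend with $f_t$ a nonnegative combination of $-\log\det X$, $-\log\det Y$ and a constant. The only differences are minor: you justify joint g-convexity by the Cartan--Hadamard/$\mathrm{CAT}(0)$ property rather than citing the literature, and you are more explicit than the paper about the integrability bookkeeping, i.e., that the representation must be read as an integral of the differences $f_t-h_t$ against $d\mu(t)=dt/t$ rather than as a difference of two separately convergent integrals.
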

\begin{proof}
  It is well-known that $\riem$ is jointly g-convex---see e.g., \citep[Ch.6]{bhatia2009positive} for a proof. Consequently, $\riem^2$ is also g-convex. In deriving our proof of the \dc representation of $\riem^2$, we will also obtain an alternative (and to our knowledge, a new) proof of this joint g-convexity as a byproduct.

  Begin with observing that $\riem^2(X,Y) = \frob{\log (X^{-\nhalf}YX^{-\nhalf})} = \sum_i (\log \lambda_i(X^{-1}Y))^2$. For brevity, we write $\lambda_i \equiv \lambda_i(X^{-1}Y)$ and $\ld \equiv \log\det$; then, using the integral~\eqref{ap:5} we have
  \begin{align*}
    \riem^2(X,Y) &= \sum_i\sint_0^\infty \left[\log(1+t\lambda_i) + \log(t+\lambda_i) - 2\log\lambda_i- 2\log(1+t) \right] \frac{dt}{t}\\
    &= \sint_0^\infty\left[\ld(I+t X^{-1}Y) + \ld(t I + X^{-1}Y) - 2\ld(X^{-1}Y)-2n\log(1+t)\right]\frac{dt}{t}\\
    &= \sint_0^\infty\left[\ld(X+tY)+\ld(tX+Y) -4\ld(X)-2\ld(Y)-2n\log(1+t)\right]\frac{dt}{t}\\
    &= \sint_0^\infty \left[-4\ld(X)-2\ld(Y)-2n\log(1+t)  - (-\ld(X+tY)-\ld(tX+Y)) \right]\frac{dt}{t}\\
    &= \sint_0^\infty \left[f_t(X,Y) - h_t(X,Y) \right]\frac{dt}{t},
  \end{align*}
  where $f_t(X,Y) = -4\ld(X)-2\ld(Y)-2n\log(1+t)$ and $h_t(X,Y) = -\ld(X+tY)-\ld(tX+Y)$. Convexity of both $f_t$ and $h_t$ is immediate from the well-known convexity of $-\log\det(X)$ on $X \succ 0$.
\end{proof}

\section{Proof Details}
\subsection{Relation between Euclidean and Riemannian metrics on $\pd_d$}
\begin{lemma}
  Let $x,y \in \pd_d$. Then the Euclidean and Riemannian distance relate as
  \begin{equation*}
      \norm{x-y}_2^2 \leq \sqrt{2} \frac{e^{d(x,y)} - 1}{e^{d(x,y)}} \max \lbrace \norm{x}_2, \norm{y}_2 \rbrace \; .
  \end{equation*}
\end{lemma}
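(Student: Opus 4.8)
The plan is to reduce the matrix inequality to a scalar statement about the generalized eigenvalues of the pair $(x,y)$. Set $M := x^{-\nhalf} y x^{-\nhalf}\succ 0$ and diagonalize it as $M = U\,\mathrm{diag}(e^{\mu_1},\dots,e^{\mu_d})\,U^{\top}$; the definition $d(x,y) = \frob{\log(x^{-\nhalf}yx^{-\nhalf})}$ then gives $d(x,y)^2 = \sum_i \mu_i^2$, so that $|\mu_i|\le d(x,y)$ for every $i$. The two congruence identities $y-x = x^{\nhalf}(M-I)x^{\nhalf}$ and $x-y = y^{\nhalf}(N-I)y^{\nhalf}$, where $N := y^{-\nhalf}xy^{-\nhalf}$ has eigenvalues $e^{-\mu_i}$, are what allow the final bound to be phrased symmetrically in terms of $\max\{\norm{x}_2,\norm{y}_2\}$ rather than just one of the two norms.

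Next I would estimate the Euclidean norm by submultiplicativity applied to the outer factors, $\norm{y-x}_2 \le \norm{x^{\nhalf}}_2^2\,\norm{M-I}_2 = \norm{x}_2\,\max_i|e^{\mu_i}-1|$, and symmetrically $\norm{x-y}_2 \le \norm{y}_2\,\max_i|e^{-\mu_i}-1|$. The factor $(e^{d(x,y)}-1)/e^{d(x,y)} = 1-e^{-d(x,y)}$ is produced by the elementary scalar fact that, along a contracting direction (an index with $\mu_i\le 0$ in the $x$-representation, or $\mu_i\ge 0$ in the $y$-representation), $|e^{\pm\mu_i}-1| = 1-e^{-|\mu_i|} \le 1-e^{-d(x,y)}$, using $|\mu_i|\le d(x,y)$ and the monotonicity of $t\mapsto 1-e^{-t}$. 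Selecting, for each configuration, the representation whose spectrum contracts is precisely what yields the $\max\{\norm{x}_2,\norm{y}_2\}$ together with the clean exponential factor.

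The main obstacle is twofold. First, when $M-I$ has a mixed-sign spectrum neither single representation is purely contracting, and the crude estimate $\max_i|e^{\mu_i}-1|$ retains an expanding term $e^{\mu_i}-1$ that is not controlled by $1-e^{-d(x,y)}$; to get around this I would orthogonally split $M-I$ into its positive and negative spectral parts, bound each part against the representation in which it contracts, and recombine the two blocks. Second, and more delicately, I must account for the square on $\norm{x-y}_2$ and the constant $\sqrt2$: the clean argument above most naturally yields the unsquared estimate $\norm{x-y}_2 \le (1-e^{-d(x,y)})\max\{\norm{x}_2,\norm{y}_2\}$, which is in fact an equality in the commuting (scalar) case $|x-y| = (1-e^{-d(x,y)})\max\{\norm{x}_2,\norm{y}_2\}$. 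Consequently the passage to the stated squared form with the factor $\sqrt2$ is where I expect the real work to lie, either through a $\sqrt2$ emerging when the two spectral blocks are recombined via $\sqrt{a^2+b^2}\le\sqrt2\,\max\{a,b\}$, or by invoking boundedness of the region in which the relevant iterates lie. I would pin down the constant by testing the final inequality against this scalar equality case, to make sure the bound is both valid and non-vacuous.
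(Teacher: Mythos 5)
Your approach is genuinely different from the paper's, which contains almost no computation: the paper introduces the Thompson metric $\delta_T(x,y)=\norm{\log(x^{-\nhalf}yx^{-\nhalf})}$ (operator norm), notes $\delta_T\le d$ because the operator norm is dominated by the Frobenius norm, and then quotes an external inequality of Snyder relating $\norm{x-y}_2^2$ to $\delta_T$ in exactly the stated squared form; monotonicity of $t\mapsto 1-e^{-t}$ finishes it. Your implicit step $|\mu_i|\le d(x,y)$ is precisely the paper's reduction, since $\delta_T=\max_i|\mu_i|$. For your first obstacle (mixed spectrum), there is a cleaner fix than splitting $M-I$ into spectral parts---a splitting that is problematic anyway, since the spectral projectors of $M$ and of $N$ are not matched up by the two congruences, so "bounding each block in the representation where it contracts" does not literally parse. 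Instead, from $e^{-\delta_T}\le\lambda_i(x^{-1}y)\le e^{\delta_T}$ one gets the two-sided Loewner bounds $e^{-\delta_T}x\preceq y$ and $e^{-\delta_T}y\preceq x$, hence $x-y\preceq(1-e^{-\delta_T})x\preceq(1-e^{-\delta_T})\norm{x}_2 I$ and symmetrically $y-x\preceq(1-e^{-\delta_T})\norm{y}_2 I$; together these give exactly your homogeneous estimate $\norm{x-y}_2\le(1-e^{-\delta_T})\max\lbrace\norm{x}_2,\norm{y}_2\rbrace$ with constant $1$ and no case analysis, and $\delta_T\le d$ converts it to the Riemannian distance.

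The genuine gap is the one you flagged yourself and never closed: the passage to the squared left-hand side with the factor $\sqrt{2}$. No recombination such as $\sqrt{a^2+b^2}\le\sqrt{2}\max\lbrace a,b\rbrace$ can produce it, because the stated inequality is not homogeneous: under $(x,y)\mapsto(tx,ty)$ the distance $d$ is invariant, so the left side scales as $t^2$ while the right side scales as $t$. Your own proposed sanity check exposes this if you actually run it: for scalars $x=c$, $y=2c$ one has $d=\log 2$ and the stated bound reads $c^2\le\sqrt{2}\,c$, which fails for $c>\sqrt{2}$. Hence the squared form cannot follow from any scale-invariant eigenvalue argument of the kind you sketch without an a priori bound on $\max\lbrace\norm{x}_2,\norm{y}_2\rbrace$; what your method proves (after the repair above) is the unsquared, dimensionally consistent version. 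The paper never confronts this issue because its proof is essentially a citation: the squared inequality is transcribed verbatim from Snyder's Euclidean--Thompson bound, the only original content being $\delta_T\le d$. So your attempt is a legitimate first-principles route to the correct homogeneous estimate, but it does not---and by the scaling obstruction cannot---establish the lemma in the squared form stated.
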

\begin{proof}
  Recall that the Thompson metric $\delta_T$ and the Riemannian distance $d$ for positive definite matrices are given by
  \begin{align*}
      \delta_T(x,y) &:= \norm{\log \big(x^{-\half} y x^{-\half} \big)} \\
      d(x,y) &:= \norm{\log \big(x^{-\half} y x^{-\half} \big)}_F \; ,
  \end{align*}
  where $\norm{\cdot}$ denotes the operator norm and $\norm{\cdot}_F$ the Frobenius norm. It is well-known that $\norm{x} \leq \norm{x}_F$ for $x \in \pd_d$. This implies $\delta_T(x,y) \leq d(x,y)$. The claim follows from a relation between the Euclidean distance and the Thompson metric, established by~\citet{snyder}:
  \begin{equation*}
      \norm{x-y}_2^2 \leq \sqrt{2} \frac{e^{\delta_T(x,y)} - 1}{e^{\delta_T(x,y)}} \max \lbrace \norm{x}_2, \norm{y}_2 \rbrace \; .
  \end{equation*}
\end{proof}


\subsection{Properties of surrogate functions}
We sketch a proof for Lem.~3.4:
\begin{lemma}
Let $\psi$ be a first-order surrogate of $\phi$ near $x \in \Mc$. Let further $\theta (z) := \psi(z) - \phi(z)$ be $L$-smooth and $z' \in \Mc$ a minimizer of $\psi$. Then:
\begin{enumerate}
    \item $\vert \theta (z) \vert \leq \frac{L}{2} \norm{x-z}^2$;
    \item $\phi(z') \leq \phi(z) + \frac{L}{2} \norm{x-z}^2$.
\end{enumerate}
\end{lemma}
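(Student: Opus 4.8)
The plan is to prove both statements directly from Definition~\ref{def:surrogate}, using only the defining properties of a first-order surrogate: that $\theta = \psi - \phi$ is $L$-smooth with $\theta(x) = 0$ and $\nabla\theta(x) = 0$, together with property (1) that $\psi$ dominates $\phi$ at its minimizers. The two claims are essentially the standard consequences of $L$-smoothness, so the proof should be short.

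For part (1), I would start from the fundamental descent-type estimate for $L$-smooth functions: for any $z$,
\begin{equation*}
    |\theta(z) - \theta(x) - \ip{\nabla\theta(x)}{z-x}| \le \tfrac{L}{2}\norm{z-x}^2.
\end{equation*}
Since $\theta(x) = 0$ and $\nabla\theta(x) = 0$ by the surrogate property, the left-hand side collapses to $|\theta(z)|$, giving $|\theta(z)| \le \tfrac{L}{2}\norm{x-z}^2$ immediately. This is the key mechanism: linearizing $\theta$ at the base point $x$ annihilates the zeroth- and first-order terms, leaving only the quadratic remainder controlled by smoothness.

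For part (2), I would chain the definitions. Let $z'$ be a minimizer of $\psi$. By property (1) of the surrogate, $\psi(z') \ge \phi(z')$, and since $z'$ minimizes $\psi$ we have $\psi(z') \le \psi(z)$ for any $z$. Writing $\psi(z) = \phi(z) + \theta(z)$ and applying part (1) to bound $\theta(z) \le \tfrac{L}{2}\norm{x-z}^2$, I obtain
\begin{equation*}
    \phi(z') \le \psi(z') \le \psi(z) = \phi(z) + \theta(z) \le \phi(z) + \tfrac{L}{2}\norm{x-z}^2,
\end{equation*}
which is exactly the claim.

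I do not anticipate a genuine obstacle here, since both parts follow mechanically from the $L$-smoothness remainder bound and the ordering $\phi(z') \le \psi(z') \le \psi(z)$. The only point requiring minor care is the direction of the inequality in part (1): what is actually needed downstream (in the proof of Theorem~\ref{prop:conv}) is the upper bound $\theta(z) \le \tfrac{L}{2}\norm{x-z}^2$, which the absolute-value statement subsumes. I would make sure the two-sided estimate is stated so that both the sign used in part (2) and the stated absolute-value bound are transparently justified from the single smoothness inequality.
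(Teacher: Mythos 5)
Your proposal is correct and follows essentially the same route as the paper's proof: the quadratic remainder bound for the $L$-smooth error $\theta$ at the anchor point $x$, followed by the chain $\phi(z') \le \psi(z') \le \psi(z) = \phi(z) + \theta(z)$. (In fact your sign $\psi(z) = \phi(z) + \theta(z)$ is the correct one; the paper's appendix writes $\phi(z) - \theta(z)$, which is a typo.)
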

\begin{proof}
For (1) recall a classical inequality, which follows from the $L$-smoothness of the surrogate function:
\begin{equation*}
    \vert \theta(z) - \theta(x) - \ip{\nabla \theta(x)}{x-z} \vert \leq \frac{L}{2} \norm{x-z}_2^2 \; .
\end{equation*}
The claim follows from $\theta(x)=0$ and $\nabla \theta(x)=0$. 

For (2), note that we have by construction
\begin{equation*}
    \phi(z') \leq \psi(z') \leq \psi(z) = \phi(z) - \theta(z) \; .
\end{equation*}
Inserting (1) directly gives the claim.
\end{proof}
\color{black}

\subsection{Inexact CCCP oracle}
For completeness, we give a proof of Thm.~3.7:
\begin{theorem}
Let $d(x,x^*) \leq R$ for all $x \in \Mc$, $\phi(x) \leq \phi(x_0)$ and let $Q(x,x_k)$ be first-order surrogate functions. Let  $\big(\tilde{Q}_k \big)_{k \geq 0}$ be a sequence of $\epsilon$-approximate CCCP updates in the sense of Eq.~3.6. 
Then
\begin{equation}
    \phi(x_k) - \phi(x^*) \leq \frac{2L \alpha_{\Mc}^2(R) (1+\epsilon)}{k+2} \qquad \forall \;  k \geq 1.
\end{equation}
\end{theorem}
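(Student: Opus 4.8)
The plan is to reproduce the proof of Theorem~\ref{prop:conv} almost verbatim, tracking where the $\epsilon$-approximation in the inexact update~\eqref{eq:approx-Q} enters the recursion and confirming it only inflates the additive term by a factor $(1+\epsilon)$. First I would recall that by Lemma~\ref{lem:surrogate}, each $Q(x,x_k)$ is a first-order surrogate with $L$-smooth error $\theta$, so Lemma~\ref{lem:surrogate-prop}(ii) gives $\phi(x_{k}) \le \phi(x) + \tfrac{L}{2}\|x-x_{k-1}\|^2$ at the \emph{exact} minimizer. In the inexact setting, instead of evaluating $Q$ at its exact argmin we only attain the value $\tilde{Q}_k \le \min_{x\in\Mc} Q(x,x_k) + \tfrac12 L\alpha_\Mc^2(R)s_k^2\epsilon$, so I would carry the extra slack term $\tfrac12 L\alpha_\Mc^2(R)s_k^2\epsilon$ alongside the surrogate bound throughout the derivation.

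Next I would run Nesterov's restriction-to-the-geodesic argument exactly as in the main proof: replace the minimization over $x\in\Mc$ by minimization over $\gamma(s;x_{k-1},x^*)$ for $s\in[0,1]$, use g-convexity to bound $\phi(\gamma(s;x_{k-1},x^*)) \le s\phi(x^*)+(1-s)\phi(x_{k-1})$, and insert $d(x^*,x_{k-1})\le R$ together with Assumption~\ref{ass}. Writing $\Delta_k := \phi(x_k)-\phi(x^*)$, the inexactness means the coefficient of $s^2$ is now $\tfrac12 L\alpha_\Mc^2(R)(1+\epsilon)$ rather than $\tfrac12 L\alpha_\Mc^2(R)$, yielding the recursion
\begin{equation*}
  \Delta_k \le \min_{s\in[0,1]}\Big[(1-s)\Delta_{k-1} + \tfrac12 L\alpha_\Mc^2(R)(1+\epsilon)s^2\Big].
\end{equation*}
From here I would repeat the two-case optimization over $s$ verbatim, with the constant $L\alpha_\Mc^2(R)$ everywhere replaced by $L\alpha_\Mc^2(R)(1+\epsilon)$: if $\Delta_{k-1}\ge L\alpha_\Mc^2(R)(1+\epsilon)$ then $s^*=1$ and $\Delta_k\le\tfrac12 L\alpha_\Mc^2(R)(1+\epsilon)$; otherwise $s^*=\Delta_{k-1}/\big(L\alpha_\Mc^2(R)(1+\epsilon)\big)$, which gives $\Delta_k^{-1}\ge\Delta_{k-1}^{-1}+\tfrac{1}{2L\alpha_\Mc^2(R)(1+\epsilon)}$ via the same $(1-x)^{-1}\ge 1+x$ step. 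Telescoping then produces the stated rate $\Delta_k \le \tfrac{2L\alpha_\Mc^2(R)(1+\epsilon)}{k+2}$.

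The only genuinely delicate point is bookkeeping around the definition of $\tilde{Q}_k$ in~\eqref{eq:approx-Q}: I would want to confirm that the $s_k^2$ appearing there is precisely the $s^2$ arising in Nesterov's geodesic restriction, so that the slack $\tfrac12 L\alpha_\Mc^2(R)s_k^2\epsilon$ merges cleanly into the $s^2$-coefficient rather than contributing an uncontrolled $s$-independent error. Since the inexactness scales with $s^2$ in exactly the way the surrogate penalty does, no term of a new order appears and the recursion retains its clean form; this is why the approximation degrades the bound by only the benign multiplicative factor $(1+\epsilon)$ instead of adding a non-vanishing floor. Everything else is a transcription of the exact proof, so I expect no real obstacle beyond verifying this alignment of the $s^2$ scaling.
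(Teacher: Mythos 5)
Your proposal is correct and follows essentially the same route as the paper's own proof: the paper likewise carries the extra $\tfrac12 L\alpha_\Mc^2(R)s^2\epsilon$ slack through the surrogate bound and Nesterov's geodesic-restriction argument, arriving at the recursion $\Delta_k \le \min_{s\in[0,1]}\bigl[(1-s)\Delta_{k-1} + \tfrac12 L\alpha_\Mc^2(R)(1+\epsilon)s^2\bigr]$ and then reruns the two-case step-size analysis with $L\alpha_\Mc^2(R)$ replaced by $L\alpha_\Mc^2(R)(1+\epsilon)$. Your explicit attention to why the $s_k^2$ scaling of the inexactness in Eq.~(3.6) merges cleanly into the $s^2$ coefficient is exactly the point the paper glosses over with ``following the steps of the proof of Thm.~3.5,'' so your write-up is if anything more complete.
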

\begin{proof}
  Replacing exact with inexact CCCP updates, we have
  \begin{equation*}
      \phi(x_k) \leq \min_{x \in \Mc} \left[ \phi(x) + \frac{L}{2} \|x-x_{k-1}\|^2  + \half L \alpha_\Mc^2(R) s^2 \epsilon \right] \; .
  \end{equation*}
  Following the steps of the proof of Thm.~3.5 to Eq.(3.5), we get
  \begin{align*}
      \phi(x_k)-\phi(x^*) &\le \min_{s \in [0,1]}\left[(1-s)(\phi(x_{k-1})-\phi(x^*)) + \half L \alpha_\Mc^2(R) s^2 (1+\epsilon) \right] \; .
  \end{align*}
  The claim follows from an analysis the step-sizes analogously to the proof of Thm.~3.5.
\end{proof}

\subsection{Exploiting Finite-sum Structure}
We give a proof of Thm.~3.8:
\begin{theorem}
Let again $d(x,x^*) \leq R$ for all $x \in \Mc$ and $\phi(x) \leq \phi(x_0)$. Assume that $g_{i_k}^k$ as defined in Alg.~2 is a first-order surrogate of $h_{i_k}$ near $x_{k-1}$.
Then Alg.~2 converges almost surely.
\end{theorem}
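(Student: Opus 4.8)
The plan is to recast Algorithm~\ref{alg.2} as an instance of incremental majorization-minimization in the sense of~\citep[Prop.~3.1]{MISO} and then run a quasi-martingale argument. First I would reindex the objective as $\phi=\tfrac1m\sum_{i=1}^m\phi_i$ with $\phi_i:=f-h_i$, so that the aggregate model $\bar Q_k:=f-\bar g^k$, where $\bar g^k:=\tfrac1m\sum_i g_i^k$, minimized in line~7 to produce $x_{k+1}=\argmin_{x\in\Mc}\bar Q_k(x)$, is a global majorant of $\phi$: each $g_i^k$ is the first-order (tangent) model of the convex, $L$-smooth $h_i$ built at the last iterate where coordinate $i$ was drawn, so $g_i^k\le h_i$ pointwise and hence $\bar Q_k\ge\phi$ everywhere. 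I record the two facts I use repeatedly: the freshly sampled surrogate is tight, $g_{i_k}^k(x_k)=h_{i_k}(x_k)$, and the linearization error is controlled quadratically, $0\le h_i(x)-g_i^k(x)\le \tfrac{L}{2}\norm{x-x'}^2$, where $x'$ is the iterate at which $g_i^k$ was built (Lemma~\ref{lem:surrogate-prop}(1)).

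Next I would establish a purely deterministic monotonicity of the surrogate minima $\xi_k:=\bar Q_k(x_{k+1})=\min_{x\in\Mc}\bar Q_k(x)$. Passing from step $k-1$ to $k$ replaces only the $i_k$-th component, so $\bar Q_k=\bar Q_{k-1}+\tfrac1m(g_{i_k}^{k-1}-g_{i_k}^{k})$; evaluating at $x_k$ and combining tightness of $g_{i_k}^k$ with the minorant property $g_{i_k}^{k-1}(x_k)\le h_{i_k}(x_k)$ gives $\bar Q_k(x_k)\le \bar Q_{k-1}(x_k)$. Since $x_k=\argmin\bar Q_{k-1}$ and $\xi_k=\min\bar Q_k\le \bar Q_k(x_k)$, this yields $\xi_k\le\xi_{k-1}$. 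Thus $(\xi_k)$ is pathwise nonincreasing and bounded below by $\phi(x^*)=\phi^*$ (finite, by the bounded-level-set and $d(x,x^*)\le R$ hypotheses), so it converges almost surely to some limit $\xi_\infty$.

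The stochastic part controls the approximation gap $\Delta_k:=\bar Q_{k-1}(x_k)-\phi(x_k)=\tfrac1m\sum_i\big(h_i(x_k)-g_i^{k-1}(x_k)\big)\ge0$, which is $\mathcal{F}_{k-1}$-measurable for $\mathcal{F}_{k-1}:=\sigma(i_1,\dots,i_{k-1})$. Splitting the one-step decrease as $\xi_{k-1}-\xi_k=\big(\bar Q_{k-1}(x_k)-\bar Q_k(x_k)\big)+\big(\bar Q_k(x_k)-\xi_k\big)$ and discarding the second (nonnegative) bracket, the first bracket equals $\tfrac1m\big(h_{i_k}(x_k)-g_{i_k}^{k-1}(x_k)\big)$; taking $\E[\,\cdot\mid\mathcal{F}_{k-1}]$ over the uniform draw of $i_k$ turns this into $\tfrac1m\Delta_k$, so that $\xi_{k-1}-\E[\xi_k\mid\mathcal{F}_{k-1}]\ge\tfrac1m\Delta_k$. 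Telescoping the total expectations and using $\E[\xi_K]\to\E[\xi_\infty]\ge\phi^*$ gives $\sum_k\E[\Delta_k]\le m(\xi_0-\phi^*)<\infty$; by Tonelli, $\sum_k\Delta_k<\infty$ almost surely, hence $\Delta_k\to0$ a.s. Since $\phi(x_k)=\xi_{k-1}-\Delta_k$, I conclude $\phi(x_k)\to\xi_\infty$ almost surely, which is the claimed convergence.

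The main obstacle is the coupling in the third step: the conditional-expectation identity that averages the single replaced component into the full gap $\Delta_k$, and the Tonelli passage from summable expected gaps to pathwise summability, must be set up with the right filtration and measurability bookkeeping. The deterministic monotonicity of $\xi_k$ is what makes the telescoping legitimate without a full Robbins--Siegmund invocation, and it in turn hinges on the convexity of each $h_i$ (global minorancy of its tangent model) and the exact tightness of the freshly sampled surrogate. Finally, to upgrade almost-sure convergence of the objective to convergence to the global optimum, I would note that $\Delta_k\to0$ forces the majorant $\bar Q_{k-1}$ to become first-order tight at $x_k$ (its error has vanishing value and gradient by Lemma~\ref{lem:surrogate-prop}), so any limit point is stationary for $\phi$; the g-convexity of $\phi$ then promotes stationarity to global optimality, giving $\xi_\infty=\phi^*$.
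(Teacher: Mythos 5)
Your proposal is correct and follows essentially the same route as the paper: both are the MISO-style incremental-surrogate argument, establishing pathwise monotonicity of the aggregate surrogate values via the single-component replacement identity (tightness of the fresh tangent model plus minorancy of the stale one), then using a conditional-expectation/monotone-convergence argument over the filtration generated by the sampled indices to show the approximation gap is summable and hence vanishes almost surely. Your write-up is in fact somewhat more explicit than the paper's (the telescoping bound $\sum_k\E[\Delta_k]\le m(\xi_0-\phi^*)$ and the final stationarity-plus-g-convexity step are only implicit there), but the underlying proof is the same.
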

\begin{proof}
  As outlined in Alg.~2, we use the following majorization to construct the CCCP oracle:
  \begin{align*}
      g^k(x) &:= \frac{1}{m} \sum_{i=1}^m g_i^k(x) \\
      g_i^k(x) &= \begin{cases}
        h_{i_k} (x_k) - \ip{\nabla h_{i_k}(x_k)}{x-x_k}, &{\rm if} \; i=i_k \\
        g_i^{k-1}, &{\rm if} \; i \neq i_k
      \end{cases} \; .
  \end{align*}
  By construction, this gives
  \begin{equation}\label{eq:surr-update}
      g^k(x) = g^{k-1}(x) + \frac{g_{i_k}^k(x) - g_{i_k}^{k-1}(x)}{m} \; .
  \end{equation}
  Observe that
  \begin{align*}
      g^k(x_k) &\overset{(1)}{\leq} g^k(x_{k-1})\\
      &\overset{(2)}{=} g^{k-1}(x_{k-1}) + \frac{g_{i_k}^k(x_{k-1}) - g_{i_k}^{k-1}(x_{k-1})}{m} \\ 
      &\overset{(3)}{=} g^{k-1}(x_{k-1}) + \frac{h_{i_k}(x_{k-1})-g_{i_k}^k(x_{k-1})}{m} \\
      &\overset{(4)}{\leq} g^{k-1}(x_{k-1}) \; .
  \end{align*}
  Here, (1) follows from $x_k$ being the argmin determined in the CCCP step and (2)
  from Eq.~\ref{eq:surr-update}. By assumption, $g_{i_k}^k$ is a first-order surrogate of $h_{i_k}$ near $x_{k-1}$, which implies by Def.~3.3(2) that $g_{i_k}^k(x_{k-1})=h_{i_k}(x_{k-1})$ and therefore (3). (4) follows from $g_{i_k}^{k-1}$ being a majorization of $h_{i_k}$. With this, $\lbrace \big( g^k(x_k)\big) \rbrace_{k \geq 0}$ is monotonically decreasing. Due to the level-set assumption this ensures that the sequence converges almost surely. 
  
  Taking expectations in the chain of inequalities, we get monotone convergence of $\lbrace \E \big[ g^k(x_k) \big] \rbrace_{k \geq 0}$. For the analysis of the approximation error $\big(g^k(x_k)-h(x_k) \big)$, note that
  \begin{align*}
      \E \big[ \sum_{k=0}^\infty g_{i_k}^k (x_k) - h_{i_{k+1}}(x_k)  \big] 
      &\overset{(5)}{=} \sum_{k=0}^\infty \E \big[ g_{i_{k+1}}^k (x_k) - h_{i_{k+1}}(x_k) \big] \\
      &\overset{(6)}{=} \sum_{k=0}^\infty \E \Big[ \E \big[ g_{i_{k+1}}^k (x_k) - h_{i_{k+1}} (x_k) \vert \mathcal{F}_k \big] \Big] \\
      &= \sum_{k=0}^\infty \E \big[ g^k(x_k) - h(x_k) \big] \\
      &\overset{(5)}{=} \E \big[ \sum_{k=0}^\infty g^k(x_k) - h(x_k) \big] < \infty \; .
  \end{align*}
  Here, (5) follows from the Beppo Levi lemma; in (6), we have rewritten the previous equality with respect to the sigma-field $\mathcal{F}_k$ generated by the $x_k$. With this, we have that $\lbrace \big( g^k(x_k) - h(x_k)\big) \rbrace \rightarrow 0$ almost surely. Now, following the proof of Thm.~3.5, we conclude that the sequence of objective values generated by Alg.~2 convergences to the optimum almost surely.
\end{proof}
%

\bibliographystyle{plainnat}
\setlength{\bibsep}{4pt}
\bibliography{refs} 

\begin{thebibliography}{50}
\providecommand{\natexlab}[1]{#1}
\providecommand{\url}[1]{\texttt{#1}}
\expandafter\ifx\csname urlstyle\endcsname\relax
  \providecommand{\doi}[1]{doi: #1}\else
  \providecommand{\doi}{doi: \begingroup \urlstyle{rm}\Url}\fi

\bibitem[Absil et~al.(2009)Absil, Mahony, and Sepulchre]{absil2009optimization}
P-A Absil, Robert Mahony, and Rodolphe Sepulchre.
\newblock Optimization algorithms on matrix manifolds.
\newblock In \emph{Optimization Algorithms on Matrix Manifolds}. Princeton
  University Press, 2009.

\bibitem[Allen-Zhu et~al.(2017)Allen-Zhu, Li, Oliveira, and
  Wigderson]{allen2017much}
Zeyuan Allen-Zhu, Yuanzhi Li, Rafael Oliveira, and Avi Wigderson.
\newblock Much faster algorithms for matrix scaling.
\newblock In \emph{2017 IEEE 58th Annual Symposium on Foundations of Computer
  Science (FOCS)}, pages 890--901. IEEE, 2017.

\bibitem[Allen-Zhu et~al.(2018)Allen-Zhu, Garg, Li, Oliveira, and
  Wigderson]{OS2018}
Zeyuan Allen-Zhu, Ankit Garg, Yuanzhi Li, Rafael Oliveira, and Avi Wigderson.
\newblock {Operator scaling via geodesically convex optimization, invariant
  theory and polynomial identity testing}.
\newblock In \emph{Proceedings of the 50th Annual {ACM} Symposium on Theory of
  Computing, Los Angeles, CA, USA, June 25-29, 2018}, 2018.
\newblock To appear.

\bibitem[Almeida et~al.(2020)Almeida, da~Cruz~Neto, Oliveira, and
  Souza]{cruzneto}
Yldenilson~Torres Almeida, Jo{\~a}o~Xavier da~Cruz~Neto, Paulo~Roberto
  Oliveira, and Jo{\~a}o Carlos de~Oliveira Souza.
\newblock A modified proximal point method for dc functions on hadamard
  manifolds.
\newblock \emph{Computational Optimization and Applications}, 76\penalty0
  (3):\penalty0 649--673, 2020.

\bibitem[Altschuler et~al.(2017)Altschuler, Niles-Weed, and
  Rigollet]{altschuler2017near}
Jason Altschuler, Jonathan Niles-Weed, and Philippe Rigollet.
\newblock Near-linear time approximation algorithms for optimal transport via
  {S}inkhorn iteration.
\newblock \emph{Advances in neural information processing systems}, 30, 2017.

\bibitem[Bac{\'a}k(2014)]{bacak2014convex}
Miroslav Bac{\'a}k.
\newblock Convex analysis and optimization in {H}adamard spaces.
\newblock In \emph{Convex Analysis and Optimization in Hadamard Spaces}. de
  Gruyter, 2014.

\bibitem[Bennett et~al.(2008)Bennett, Carbery, Christ, and
  Tao]{bennett2008brascamp}
Jonathan Bennett, Anthony Carbery, Michael Christ, and Terence Tao.
\newblock {The {B}rascamp-{L}ieb inequalities: finiteness, structure and
  extremals}.
\newblock \emph{Geometric and Functional Analysis}, 17\penalty0 (5):\penalty0
  1343--1415, 2008.

\bibitem[Bento et~al.(2017)Bento, Ferreira, and Melo]{bento2017iteration}
Glaydston~C Bento, Orizon~P Ferreira, and Jefferson~G Melo.
\newblock Iteration-complexity of gradient, subgradient and proximal point
  methods on {R}iemannian manifolds.
\newblock \emph{Journal of Optimization Theory and Applications}, 173\penalty0
  (2):\penalty0 548--562, 2017.

\bibitem[Bhatia(2009)]{bhatia2009positive}
Rajendra Bhatia.
\newblock \emph{{Positive definite matrices}}.
\newblock Princeton University Press, 2009.

\bibitem[Birkhoff(1957)]{birkhoff1957extensions}
Garrett Birkhoff.
\newblock Extensions of {J}entzsch's theorem.
\newblock \emph{Transactions of the American Mathematical Society}, 85\penalty0
  (1):\penalty0 219--227, 1957.

\bibitem[Bonnabel(2013)]{bonnabel2013stochastic}
Silvere Bonnabel.
\newblock Stochastic gradient descent on {R}iemannian manifolds.
\newblock \emph{IEEE Transactions on Automatic Control}, 58\penalty0
  (9):\penalty0 2217--2229, 2013.

\bibitem[Boumal(2020)]{boumal2020introduction}
Nicolas Boumal.
\newblock An introduction to optimization on smooth manifolds.
\newblock \emph{Available online, May}, 3, 2020.

\bibitem[Boumal et~al.(2014)Boumal, Mishra, Absil, and
  Sepulchre]{boumal2014manopt}
Nicolas Boumal, Bamdev Mishra, P-A Absil, and Rodolphe Sepulchre.
\newblock Manopt, a {M}atlab toolbox for optimization on manifolds.
\newblock \emph{The Journal of Machine Learning Research}, 15\penalty0
  (1):\penalty0 1455--1459, 2014.

\bibitem[Boumal et~al.(2019)Boumal, Absil, and Cartis]{boumal2019global}
Nicolas Boumal, Pierre-Antoine Absil, and Coralia Cartis.
\newblock Global rates of convergence for nonconvex optimization on manifolds.
\newblock \emph{IMA Journal of Numerical Analysis}, 39\penalty0 (1):\penalty0
  1--33, 2019.

\bibitem[Boyd et~al.(2007)Boyd, Kim, Vandenberghe, and
  Hassibi]{boyd2007tutorial}
Stephen Boyd, Seung-Jean Kim, Lieven Vandenberghe, and Arash Hassibi.
\newblock A tutorial on geometric programming.
\newblock \emph{Optimization and engineering}, 8\penalty0 (1):\penalty0
  67--127, 2007.

\bibitem[Cohen et~al.(2017)Cohen, Madry, Tsipras, and Vladu]{cohen2017matrix}
Michael~B Cohen, Aleksander Madry, Dimitris Tsipras, and Adrian Vladu.
\newblock Matrix scaling and balancing via box constrained {N}ewton's method
  and interior point methods.
\newblock In \emph{2017 IEEE 58th Annual Symposium on Foundations of Computer
  Science (FOCS)}, pages 902--913. IEEE, 2017.

\bibitem[Courtade et~al.(2017)Courtade, Fathi, and
  Pananjady]{courtade2017wasserstein}
Thomas~A Courtade, Max Fathi, and Ashwin Pananjady.
\newblock Wasserstein stability of the entropy power inequality for log-concave
  random vectors.
\newblock In \emph{2017 IEEE International Symposium on Information Theory
  (ISIT)}, pages 659--663. IEEE, 2017.

\bibitem[da~Cruz~Neto et~al.(1998)da~Cruz~Neto, De~Lima, and
  Oliveira]{da1998geodesic}
JX~da~Cruz~Neto, LL~De~Lima, and PR~Oliveira.
\newblock Geodesic algorithms in {R}iemannian geometry.
\newblock \emph{Balkan J. Geom. Appl}, 3\penalty0 (2):\penalty0 89--100, 1998.

\bibitem[Ferreira et~al.(2021)Ferreira, Santos, and
  Souza]{ferreira2021difference}
Orizon~P Ferreira, Elianderson~M Santos, and Jo{\~a}o Carlos~O Souza.
\newblock The difference of convex algorithm on riemannian manifolds.
\newblock \emph{arXiv preprint arXiv:2112.05250}, 2021.

\bibitem[Franks and Moitra(2020)]{franks2020rigorous}
William~Cole Franks and Ankur Moitra.
\newblock Rigorous guarantees for {T}yler’s {M}-estimator via quantum
  expansion.
\newblock In \emph{Conference on Learning Theory}, pages 1601--1632. PMLR,
  2020.

\bibitem[Garg et~al.(2017)Garg, Gurvits, de~Oliveira, and
  Wigderson]{garg2016algorithmic}
Ankit Garg, Leonid Gurvits, Rafael~Mendes de~Oliveira, and Avi Wigderson.
\newblock {Algorithmic and optimization aspects of {B}rascamp-{L}ieb
  inequalities, via operator scaling}.
\newblock In \emph{Proceedings of the 49th Annual {ACM} Symposium on Theory of
  Computing, Montreal, QC, Canada, June 19-23, 2017}, pages 397--409, 2017.
\newblock \doi{10.1145/3055399.3055458}.
\newblock URL \url{http://doi.acm.org/10.1145/3055399.3055458}.

\bibitem[Hosseini et~al.(2016)Hosseini, Sra, Theis, and
  Bethge]{hosseini2016inference}
Reshad Hosseini, Suvrit Sra, Lucas Theis, and Matthias Bethge.
\newblock Inference and mixture modeling with the elliptical {G}amma
  distribution.
\newblock \emph{Computational Statistics \& Data Analysis}, 101:\penalty0
  29--43, 2016.

\bibitem[Jain et~al.(2017)Jain, Jin, Kakade, and Netrapalli]{jain2017global}
Prateek Jain, Chi Jin, Sham Kakade, and Praneeth Netrapalli.
\newblock Global convergence of non-convex gradient descent for computing
  matrix squareroot.
\newblock In \emph{Artificial Intelligence and Statistics}, pages 479--488.
  PMLR, 2017.

\bibitem[Konno et~al.(1997)Konno, Thach, and Tuy]{konno1997dc}
Hiroshi Konno, Phan~Thien Thach, and Hoang Tuy.
\newblock {DC} functions and {DC} sets.
\newblock In \emph{Optimization on Low Rank Nonconvex Structures}, pages
  47--76. Springer, 1997.

\bibitem[Lanckriet and Sriperumbudur(2009)]{lanckriet2009convergence}
Gert Lanckriet and Bharath~K Sriperumbudur.
\newblock On the convergence of the concave-convex procedure.
\newblock \emph{Advances in neural information processing systems}, 22, 2009.

\bibitem[Le~Thi and Pham~Dinh(2018)]{le2018dc}
Hoai~An Le~Thi and Tao Pham~Dinh.
\newblock Dc programming and {DCA}: thirty years of developments.
\newblock \emph{Mathematical Programming}, 169\penalty0 (1):\penalty0 5--68,
  2018.

\bibitem[Mairal(2015)]{MISO}
Julien Mairal.
\newblock Incremental majorization-minimization optimization with application
  to large-scale machine learning.
\newblock \emph{SIAM Journal on Optimization}, 25\penalty0 (2):\penalty0
  829--855, 2015.

\bibitem[Mariet and Sra(2015)]{mariet2015fixed}
Zelda Mariet and Suvrit Sra.
\newblock Fixed-point algorithms for learning determinantal point processes.
\newblock In \emph{International Conference on Machine Learning}, pages
  2389--2397. PMLR, 2015.

\bibitem[Nesterov(2013)]{nesterov2013gradient}
Yu~Nesterov.
\newblock Gradient methods for minimizing composite functions.
\newblock \emph{Mathematical programming}, 140\penalty0 (1):\penalty0 125--161,
  2013.

\bibitem[Ollila and Tyler(2014)]{ollila2014regularized}
Esa Ollila and David~E Tyler.
\newblock Regularized $m$-estimators of scatter matrix.
\newblock \emph{IEEE Transactions on Signal Processing}, 62\penalty0
  (22):\penalty0 6059--6070, 2014.

\bibitem[Sinkhorn and Knopp(1967)]{sinkhorn}
Richard Sinkhorn and Paul Knopp.
\newblock Concerning nonnegative matrices and doubly stochastic matrices.
\newblock \emph{Pacific Journal of Mathematics}, 21\penalty0 (2):\penalty0
  343--348, 1967.

\bibitem[Snyder(2016)]{snyder}
David~A. Snyder.
\newblock On the relation of {S}chatten norms and the {T}hompson metric.
\newblock \emph{arXiv:1608.03301}, 2016.

\bibitem[Souza and Oliveira(2015)]{souza2015proximal}
JCO Souza and PR~Oliveira.
\newblock A proximal point algorithm for dc fuctions on hadamard manifolds.
\newblock \emph{Journal of Global Optimization}, 63\penalty0 (4):\penalty0
  797--810, 2015.

\bibitem[Sra(2016{\natexlab{a}})]{sdiv}
Suvrit Sra.
\newblock Positive definite matrices and the {S}-divergence.
\newblock \emph{Proceedings of the American Mathematical Society}, 144\penalty0
  (7):\penalty0 2787--2797, 2016{\natexlab{a}}.

\bibitem[Sra(2016{\natexlab{b}})]{sra2016matrix}
Suvrit Sra.
\newblock On the matrix square root via geometric optimization.
\newblock \emph{The Electronic Journal of Linear Algebra}, 31:\penalty0
  433--443, 2016{\natexlab{b}}.

\bibitem[Sra and Hosseini(2015)]{sra2015conic}
Suvrit Sra and Reshad Hosseini.
\newblock {Conic geometric optimization on the manifold of positive definite
  matrices}.
\newblock \emph{SIAM Journal on Optimization}, 25\penalty0 (1):\penalty0
  713--739, 2015.

\bibitem[Sra et~al.(2018)Sra, Vishnoi, and Yildiz]{sra2018geodesically}
Suvrit Sra, Nisheeth~K Vishnoi, and Ozan Yildiz.
\newblock On geodesically convex formulations for the {B}rascamp-{L}ieb
  constant.
\newblock In \emph{Approximation, Randomization, and Combinatorial
  Optimization. Algorithms and Techniques (APPROX/RANDOM 2018)}. Schloss
  Dagstuhl-Leibniz-Zentrum fuer Informatik, 2018.

\bibitem[Tyler(1987)]{tyler1987distribution}
David~E Tyler.
\newblock A distribution-free {M}-estimator of multivariate scatter.
\newblock \emph{The annals of Statistics}, pages 234--251, 1987.

\bibitem[Udriste(1994)]{udriste1994convex}
Constantin Udriste.
\newblock \emph{Convex Functions and Optimization Methods on {R}iemannian
  Manifolds}, volume 297.
\newblock Springer Science \& Business Media, 1994.

\bibitem[Weber and Sra(2021)]{weber2021projection}
Melanie Weber and Suvrit Sra.
\newblock Projection-free nonconvex stochastic optimization on {R}iemannian
  manifolds.
\newblock \emph{IMA Journal of Numerical Analysis}, 2021.

\bibitem[Weber and Sra(2022{\natexlab{a}})]{weber2022computing}
Melanie Weber and Suvrit Sra.
\newblock Computing {B}rascamp-{L}ieb constants through the lens of {T}hompson
  geometry.
\newblock \emph{arXiv preprint arXiv:2208.05013}, 2022{\natexlab{a}}.

\bibitem[Weber and Sra(2022{\natexlab{b}})]{weber2022riemannian}
Melanie Weber and Suvrit Sra.
\newblock Riemannian optimization via {F}rank-{W}olfe methods.
\newblock \emph{Mathematical Programming}, 2022{\natexlab{b}}.

\bibitem[Wiesel(2012)]{wiesel2012geodesic}
Ami Wiesel.
\newblock Geodesic convexity and covariance estimation.
\newblock \emph{IEEE transactions on signal processing}, 60\penalty0
  (12):\penalty0 6182--6189, 2012.

\bibitem[Wiesel et~al.(2015)Wiesel, Zhang, et~al.]{wiesel2015structured}
Ami Wiesel, Teng Zhang, et~al.
\newblock Structured robust covariance estimation.
\newblock \emph{Foundations and Trends{\textregistered} in Signal Processing},
  8\penalty0 (3):\penalty0 127--216, 2015.

\bibitem[Yuille and Rangarajan(2001)]{yuille2001concave}
Alan~L Yuille and Anand Rangarajan.
\newblock The concave-convex procedure {(CCCP)}.
\newblock \emph{Advances in neural information processing systems}, 14, 2001.

\bibitem[Yuille and Rangarajan(2003)]{yuille2003concave}
Alan~L Yuille and Anand Rangarajan.
\newblock The concave-convex procedure.
\newblock \emph{Neural computation}, 15\penalty0 (4):\penalty0 915--936, 2003.

\bibitem[Zadeh et~al.(2016)Zadeh, Hosseini, and Sra]{zadeh2016geometric}
Pourya Zadeh, Reshad Hosseini, and Suvrit Sra.
\newblock Geometric mean metric learning.
\newblock In \emph{International conference on machine learning}, pages
  2464--2471. PMLR, 2016.

\bibitem[Zhang and Sra(2016)]{zhang2016first}
Hongyi Zhang and Suvrit Sra.
\newblock First-order methods for geodesically convex optimization.
\newblock In \emph{Conference on Learning Theory}, pages 1617--1638, 2016.

\bibitem[Zhang et~al.(2016)Zhang, J~Reddi, and Sra]{zhang2016riemannian}
Hongyi Zhang, Sashank J~Reddi, and Suvrit Sra.
\newblock Riemannian {SVRG}: Fast stochastic optimization on {R}iemannian
  manifolds.
\newblock \emph{Advances in Neural Information Processing Systems}, 29, 2016.

\bibitem[Zhang(2016)]{zhang2016robust}
Teng Zhang.
\newblock {Robust subspace recovery by Tyler's M-estimator}.
\newblock \emph{Information and Inference: A Journal of the IMA}, 5\penalty0
  (1):\penalty0 1--21, 2016.

\end{thebibliography}

\end{document}